\theoremstyle{plain} 
\newtheorem{theorem}{Theorem}
\newtheorem{proposition}[theorem]{Proposition}
\newtheorem{definition}[theorem]{Definition}
\theoremstyle{definition} 
\newcommand\CorrespondingAuthor[1]{%
  \begingroup%
  \def\@makefnmark{}%
  \footnotetext{Corresponding author: #1}%
  \endgroup%
}
\renewenvironment{abstract}{%
  \small%
  \providecommand\keywords{%
    \par\medskip\noindent\textit{Keywords:}\xspace}%
  \begin{center}%
    \bfseries \abstractname\vspace{-.5em}\vspace{\z@}%
  \end{center}%
  \quote%
}{\endquote}
\newcommand{\R}{\mathbb{R}}
\newcommand{\N}{\mathbb{N}}
\newcommand{\Z}{\mathbb{Z}}
\newcommand{\Me}{\mathrm{Me}}
\newcommand{\Mee}{\widehat{\mathrm{Me}}}
\newcommand{\Fe}{\widehat{F}}
\newcommand{\dd}{\mathrm d}
\newcommand{\1}{\mathbf 1}
\newcommand{\bX}{\mathbf{X}}
\newcommand{\bY}{\mathbf{Y}}
\newcommand{\bx}{\mathbf{x}}
\newcommand{\by}{\mathbf{y}}
\newcommand{\bZ}{\mathbf{Z}}
\newcommand{\F}{\mathcal{F}}
\newcommand{\HAC}{{\mathbf{\mbox{\bf Assumption } \mathcal A}}_C}
\newcommand{\HAW}{{\mathbf{\mbox{\bf Assumption } \mathcal A}}_{W_n}}
\newcommand{\HC}{{\mathbf{\mathcal A}}_C}
\newcommand{\HW}{{\mathbf{\mathcal A}}_{W_n}}
\newcommand{\HACbis}{  {\mathbf{\mbox{\bf Assumption } \mathcal A}}_{C_R}}
\newcommand{\HCbis}{  {\mathbf{\mathcal A}}_{C_R}}
\newcommand{\HAmed}{  {\mathbf{\mbox{\bf Assumption } \mathcal A}}_{\mathrm{med}}}
\newcommand{\Hmed}{  {\mathbf{\mathcal A}}_{\mathrm{med}}}
\renewcommand{\P}{\mathrm{P}}
\DeclareMathOperator\E{E}
\DeclareMathOperator\Var{Var}
\let\oldtextbf\textbf
\renewcommand\textbf[1]{\oldtextbf{\boldmath #1}}
\begin{document}
\title{Standard and robust intensity parameter estimation for stationary determinantal point processes
}

\author[1]{Christophe A.N. Biscio}
\affil[1]{Department of Mathematical Sciences, Aalborg University, Denmark\\
\texttt{christophe@math.aau.dk}.}
\author[2]{Jean-François Coeurjolly}
\affil[2]{Univ. Grenoble Alpes, France\\
\texttt{Jean-Francois.Coeurjolly@upmf-grenoble.fr}.}

\date{}

\maketitle

\begin{abstract}
  This work is concerned with the estimation of the intensity parameter of a stationary determinantal point process. We consider the standard estimator, corresponding to the number of observed points per unit volume and a recently introduced median-based estimator more robust to outliers. The consistency and  asymptotic normality of estimators are obtained under mild assumptions on the  determinantal point process. We illustrate the efficiency of the procedures in a simulation study.

\keywords Repulsive point processes; Statistical inference;  Robust statistics; Sample quantiles; Brillinger mixing.
\end{abstract}

\section{Introduction}

Spatial point patterns are datasets containing the random locations of some event of interest which arise in many scientific fields such as biology, epidemiology, seismology and hydrology. Spatial point processes are the stochastic models generating such data. We refer to  \citet{stoyan:kendall:mecke:95}, \cite{illian:et:al:08} or \citet{moeller:waagepetersen:03} for an overview on spatial point processes.  
 The Poisson point process is the reference process to model random locations of points without  interaction. Many alternative models such as Cox point processes (including Neymann-Scott processes, shot noise Cox processes, log-Gaussian Cox processes) or  Gibbs point processes allow us to introduce clustering effects or to produce regular patterns (see again e.g. \citet{moeller:waagepetersen:03} or \citet{illian:et:al:08}). 
First introduced by~\cite{macchi:75}, the interesting class of determinantal point processes has been revisited recently by~\citet{lavancier:moller:rubak:15} in a statistical context. Such processes are in particular designed to model repulsive point patterns. 

In this paper, we focus  on stationary point processes, that is on point processes with distribution invariant by translation, and on first order characteristics for such processes, that is on the intensity parameter denoted by $\lambda$. The nonnegative real parameter $\lambda$ measures the mean number of points per unit volume and is needed for the estimation of second-order characteristics of point processes such as the pair correlation function or the Ripley's K-function, see for instance~\citet{moeller:waagepetersen:03}. Thus, the estimation of $\lambda$ has been the subject of a large literature (see e.g. \cite{illian:et:al:08}). Asymptotic properties for estimators of $\lambda$ are non trivial and may be particularly challenging to obtain for some models such as the class of Gibbs point processes.  

In this paper, we investigate the theoretical and practical properties of two different estimators of $\lambda$ for the class of stationary determinantal point processes.  The first estimator is the standard one, corresponding to the number of observed points  divided by the volume of the observation domain. The second one is a median-based estimator recently proposed by~\citet{coeurjolly:15} to handle outliers such as extra points or missing points.
The form of these two estimators is not novel and follows the aforementioned references. Asymptotic properties for these two estimators have been established under general conditions on the underlying point process. However, these conditions 
have been  checked mainly for Cox processes. We propose two contributions. First, we provide  conditions on the kernel $C$, defining a determinantal point process (see Section~\ref{sec:background} for details), which ensure that the standard and the median-based estimators are consistent and satisfy a central limit theorem. Second, we investigate the finite-sample size properties of the proposed procedures through a simulation study, where, in particular, we evaluate the ability of the estimators to be robust to outliers.

The rest of the paper is organized as follows. A short background on stationary determinantal point processes is presented in Section~\ref{sec:background}. Section~\ref{sec:std} focuses on the standard estimator and details asymptotic properties for this estimator as well as an estimator of its asymptotic variance. Section~\ref{sec:median} deals with the median-based estimator. Finally, we conduct a simulation study in Section~\ref{sec:sim} to compare these estimators in different scenarios. Proofs of the main results are postponed to Appendix.

\section{Stationary Determinantal point processes} \label{sec:background}

\subsection{Background and definition}

For $d\geq1$, let $\bX$ be a spatial point process defined on $\R^d$, which we see as a random locally finite subset of $\R^d$. Let $\mathcal B(\R^d)$ denote the class of bounded Borel sets in~$\R^d$. For $u=(u^1,\dots,u^d)^\top \in \R^d$ and for $A,B\in \mathcal B(\R^d)$, we denote by $|u|=\max_{i=1,\dots,d} |u^i|$ and by  $d(A,B)$ the minimal distance between $A$ and $B$. 

For any $W\in \mathcal B(\R^d)$, we denote by $|W|$ its Lebesgue measure, by $N(\bX \cap W)$ the number of points in $\bX \cap W$ and a realization of $\bX\cap W$ is of the form
$\bx=\{x_1,\dots,x_m\}\subset W$ for some nonnegative finite integer $m$. We consider simple point processes which means that  two points of the process never coincide almost surely. For further details about point processes, we refer to~\cite{daley:vere-jones:03,daley08} and \cite{moeller:waagepetersen:03}. 


The factorial moment measures are quantities of special interest for point processes. 
For any integer $l\ge 1$, $\bX$ is said to have an $l$-th order factorial moment measure $\alpha^{(l)}$ if for all non-negative measurable functions $h$ defined on $\R^{dl}$,
\begin{equation}\label{eq:factorial}
\E \mathop{\sum\nolimits\sp{\ne}}_{u_1,\dots,u_l\in\bX}  h(u_1,\dots,u_l) =
\int_{\R^{dl}} h(u_1,\dots,u_l)\, \alpha^{(l)}( \mathrm du_1\times \cdots \times \mathrm du_l)
\end{equation}
 where the sign $\not=$ over the summation means that $u_1,\dots,u_l$ are pairwise distinct. If $\alpha^{(l)}$ admits a density with respect to the Lebesgue measure on $\R^{dl}$, this density is called the $l$-th order product density  of $\bX$ and is denoted by $\rho_l$.
Note that $\rho_1=\lambda$ and that for the homogeneous Poisson point process $\rho_l(u_1,\dots,u_l)=\lambda^l$. We assume from now on, that $\lambda$ is a positive real number.

The rest of this section is devoted to stationary determinantal point processes on $\R^d$. We refer to~\cite{hough2009zeros} or \cite{lavancier:moller:rubak:15} for a  review  on non-stationary  determinantal point processes on $\mathbb C^d$. 
 
\begin{definition}\label{def:DPP}
Let $C: \R^d \rightarrow \R$ be a function.
A point process $\bX$ on $\R^d$ is  a stationary determinantal point process (DPP) with kernel $C$ and we denote for short $\bX \sim DPP(C)$,  if for all $l\geq 1$ its $l$-th order product density satisfies the relation
\begin{align*}
 \rho_l(x_1,\ldots x_l)=\det [C](x_1,\dots,x_l)
\end{align*}
for  every $(x_1,\dots,x_l)\in \R^{dl}$, where $[C](x_1,\dots,x_l)$ denotes the $l \times l$ matrix with entries $C(x_i-x_j)$,  $1\leq i,j\leq l$.
\end{definition}


Conditions on the kernel $C$ are required  to ensure the existence of $DPP(C)$. To introduce the result, let $S$ be a compact set of $\R^d$ and consider the function from  $S\times S$, 
$(x,y) \mapsto C(x-y)$. By the Mercer theorem (see \cite{riesz1990functionalanalysis}), if we assume that $C$ is continuous, the following series expansion holds
\begin{align}\label{eq:spectral}
 C(x-y) = \sum_{k=1}^{\infty} \beta^S_k \phi_k(x) \overline{\phi_k(y)}
\end{align}
where $\lbrace \phi_k \rbrace_{k\in \N}$ is an orthonormal basis of $L^2(S)$ and where $\beta^S_k$, $k\geq 1$, are real numbers. Finally, let $\mathcal F(h)$ denote the Fourier transform for a function $h\in L^1(\R^d)$ defined for all $t \in \R^d$ by 
\begin{align*}
  \F(h)(t)=\int_{\R^d} h(x) e^{-2i\pi x\cdot t}\dd x,
\end{align*}
a definition which can be extended to  $L^2(\R^d)$ by Plancherel's theorem (see~\cite{stein1971fourier}). The following result gives a sufficient condition to ensure the existence.

\begin{proposition}[{\citet[Proposition 1]{lavancier:moller:rubak:15}, \cite{hough2009zeros}}]\label{DPPexistence}
Assume $C$ is a symmetric continuous real-valued function in $L^2(\R^d)$. Then $DPP(C)$ exists if and only if one of the two statements is satisfied:\\
(i)  For all compact $S\subset \R^d$ and $k\geq 1$, $0\leq \beta_k^S \leq 1$.\\
(ii) $0\leq \F(C)\leq 1$.
\end{proposition}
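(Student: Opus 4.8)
The plan is to deduce the statement from the general existence criterion for determinantal point processes, combined with a Fourier translation of the resulting spectral condition that is available precisely because $C$ is stationary. Associate with $C$ the convolution operator $\K$ on $L^2(\R^d)$, $(\K g)(x)=\int_{\R^d} C(x-y)g(y)\,\dd y$, which is self-adjoint because $C$ is real and symmetric. The deep ingredient I would quote (Macchi--Soshnikov, as in \citet{hough2009zeros}) is that, in local form, $DPP(C)$ exists if and only if for every compact $S$ the compression of $\K$ to $L^2(S)$ is nonnegative and bounded above by the identity, i.e.\ $0\preceq P_S\K P_S\preceq I$, where $P_S$ is the orthogonal projection onto $L^2(S)$. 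The task then reduces to showing that this operator condition is equivalent to each of (i) and (ii).

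Existence $\Leftrightarrow$ (i). A point process on $\R^d$ exists as soon as its restrictions to all compact $S$ exist and are consistent, and here consistency is automatic since the product densities in \eqref{eq:factorial} are prescribed coherently by the determinants of Definition \ref{def:DPP}. On a fixed compact $S$, continuity of $C$ makes $(x,y)\mapsto C(x-y)$ a square-integrable self-adjoint kernel, so Mercer's expansion \eqref{eq:spectral} diagonalises $P_S\K P_S$ with eigenvalues $\beta^S_k$ and eigenfunctions $\phi_k$. Hence $0\preceq P_S\K P_S\preceq I$ is, eigenvalue by eigenvalue, the same as $0\le\beta^S_k\le1$; quantifying over all compact $S$ yields (i).

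(ii) $\Rightarrow$ (i), the easy inclusion. By Plancherel's theorem, for every $g\in L^2(\R^d)$,
\[
\langle \K g,g\rangle=\int_{\R^d}\F(C)(t)\,|\F(g)(t)|^2\,\dd t ,
\]
with $\F(C)$ real and even. If $0\le\F(C)\le1$ almost everywhere, this forces $0\le\langle\K g,g\rangle\le\|g\|^2$ for all $g$, in particular for all $g$ supported in a compact $S$, so $0\preceq P_S\K P_S\preceq I$ and (i) holds by the Rayleigh characterisation of $\beta^S_k$.

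(i) $\Rightarrow$ (ii), the main obstacle. I would argue by contraposition. If $\F(C)>1$ on a set of positive measure, then after intersecting with $\{\F(C)\le M\}$ I may fix a Borel set $A$ with $0<|A|<\infty$ and $1+\varepsilon\le\F(C)\le M$ on $A$; the idealised maximiser $f=\F^{-1}(\1_A)$ satisfies $\langle\K f,f\rangle=\int_A\F(C)\ge(1+\varepsilon)\|f\|^2$, but $f$ is not compactly supported. The crux is to manufacture a genuinely compactly supported $g$, supported in some compact $S$, whose ratio $\langle\K g,g\rangle/\|g\|^2$ still exceeds $1$; then $\beta^S_{\max}>1$ by the Rayleigh principle, contradicting (i), and a symmetric argument starting from $\{\F(C)<0\}$ contradicts the lower bound. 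I expect this localisation to be the hard step: truncating $f$ to a large cube smears $\F(f)$ away from $A$, and since $\F(C)$ need be neither bounded above nor bounded below elsewhere one must control the resulting spectral leakage. This is exactly a Szegő-type phenomenon for the Toeplitz-like operators $P_S\K P_S$---as $S\uparrow\R^d$ the extreme eigenvalues converge to $\operatorname{ess\,sup}\F(C)$ and $\operatorname{ess\,inf}\F(C)$---so I would either invoke such a limit theorem or perform the truncation quantitatively, taking $g$ to be a smooth wave packet whose Fourier transform is concentrated on $A$ with rapidly decaying tails, making the leakage integral negligible. Combining the three steps, existence, (i) and (ii) are all equivalent to $0\preceq\K\preceq I$ on compacts, which proves the proposition.
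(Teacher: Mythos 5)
The paper itself gives \emph{no} proof of this proposition: it is imported verbatim from \citet[Proposition 1]{lavancier:moller:rubak:15} and \cite{hough2009zeros}, so the only meaningful comparison is with the proof in that literature. Your plan essentially reconstructs it: reduce existence to the Macchi--Soshnikov spectral criterion $0\preceq P_S\K P_S\preceq I$ on every compact $S$, identify that criterion with (i) via the expansion \eqref{eq:spectral}, and pass to (ii) by Plancherel. Steps one and two of your argument are correct as written, with one small caveat: Mercer's theorem presupposes a positive semi-definite kernel, so for a merely symmetric continuous $C$ you should invoke the spectral theorem for compact self-adjoint (Hilbert--Schmidt) operators to diagonalise $P_S\K P_S$; the paper's own statement of \eqref{eq:spectral} is loose on exactly the same point. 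Local trace-class, needed for Macchi--Soshnikov, is immediate here since $C$ is continuous with $C(0)=\lambda$.

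The direction (i) $\Rightarrow$ (ii), which you correctly single out as the crux, is the one place where your text is not yet a proof. As stated, the claim that a wave packet's ``rapidly decaying tails'' make the leakage negligible does not follow: $\F(C)$ is only known to lie in $L^2(\R^d)$, so it may be unbounded, and pointwise decay of $|\F(g)|^2$ off the set $A$ does not by itself control $\int_{A^c}\F(C)\,|\F(g)|^2$. The missing ingredient is supplied precisely by the hypothesis $C\in L^2(\R^d)$. Pick a Lebesgue point $t_0$ of $\F(C)$ with $\F(C)(t_0)>1$ (almost every point of $A$ qualifies), take $\psi$ smooth and compactly supported with $\|\psi\|_{L^2}=1$, and set $g_\delta(x)=\delta^{d/2}\psi(\delta x)e^{2i\pi x\cdot t_0}$, which is compactly supported with $\|g_\delta\|_{L^2}=1$ and $|\F(g_\delta)(t)|^2=\delta^{-d}|\F(\psi)((t-t_0)/\delta)|^2$, an approximate identity at $t_0$. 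The local part of $\langle\K g_\delta,g_\delta\rangle$ converges to $\F(C)(t_0)$ by the Lebesgue point property, while the contribution of $\{|t-t_0|>\eta\}$ is bounded, by Cauchy--Schwarz, by
\begin{equation*}
  \|\F(C)\|_{L^2}\left(\int_{|s|>\eta/\delta}\delta^{-d}\,|\F(\psi)(s)|^4\,\dd s\right)^{1/2}\longrightarrow 0
\end{equation*}
as $\delta\to0$, because $\F(\psi)$ decays faster than any polynomial. Hence $\langle\K g_\delta,g_\delta\rangle>1=\|g_\delta\|^2$ for small $\delta$, so $\beta^S_1>1$ for some compact $S$ by the Rayleigh principle, contradicting (i); the set $\{\F(C)<0\}$ is handled symmetrically. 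With this quantitative step inserted (or with the Szeg\H{o}-type limit theorem you mention, which is itself proved by the same localisation), your argument is complete and coincides with the standard proof in the cited references.
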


For most of the kernels, Proposition~\ref{DPPexistence} (ii) provides a more useful way of characterizing existence. To rephrase this condition, any real-valued continuous covariance function $C$ in $L^2(\R^d)$ with $\F(C)\leq 1$ defines a stationary DPP. Some of the results presented hereafter (Propositions~\ref{prop:Rdependent} and~\ref{prop:ZW}) require a slightly more restrictive condition, namely $\mathcal F(C)<1$. To sum up, we consider the following assumption denoted by $\HC$.\medskip

\noindent{$\HAC$}: $C$ is a symmetric  and continuous function, $C \in L^2(\R^d)$, $C(0)=\lambda$ and $0\leq \F(C) <1$.\medskip


\subsection{Mixing-type properties}

We continue this section by discussing Brillinger mixing and $R$-dependence type properties for stationary DPPs. To introduce the first one, we assume that the factorial moment measure exists until a certain $l\geq1$. Then, the  $l$-th order factorial cumulant moment measure $\gamma_{[l]}$ (see~\cite{daley:vere-jones:03,daley08}) is defined
for any  $A_1,\ldots, A_l$ in $\mathcal B(\R^d)$ by
\begin{align*}
\gamma_{[l]}\left( \prod_{i=1}^l A_i \right) = \sum_{j=1}^l (-1)^{j-1} (j-1)! \ \sum_{B_1, \ldots, B_j \in \mathcal{P}_j^l } \prod_{i=1}^j \alpha^{\left(\left|B_i\right|\right)} \left( \prod_{l_i \in B_i} A_{l_i} \right), 
\end{align*}
 where for all $j\leq l$, $\mathcal{P}_j^l$ denotes the set of all partitions of $\lbrace 1,\ldots,l \rbrace$ into $j$ non empty sets $B_1,\ldots,B_j$. For stationary point processes, we define, for $l\geq 2$, the so-called reduced version of the factorial cumulant moment measure  $\gamma^{red}_{[l]}$  by
 \begin{align*}
 \gamma_{[l]} \left( \prod_{i=1}^l A_i \right) = \int_{A_l} \gamma^{red}_{[l]} \left( \prod_{i=1}^{l-1} (A_i-x)  \right) \dd x
 \end{align*}
where $A_1,\ldots, A_l\in \mathcal B(\R^d)$ and  for $i=1,\ldots,l-1$, $A_i-x$ stands for the translation of $A_i$ by $x$. By Hahn-Jordan decomposition (see~\citet[Theorem 5.6.1]{dudley2002real}), we may write $\gamma^{red}_{[l]}=\gamma^{+red}_{[l]} - \gamma^{-red}_{[l]}$ where $\gamma^{+red}_{[l]}$ and $\gamma^{-red}_{[l]}$ are two measures. The total variation measure of $\gamma^{red}_{[l]}$ is then defined as $|\gamma^{red}_{[l]}|=\gamma^{+red}_{[l]} + \gamma^{-red}_{[l]}$. Then, a point process is said to be Brillinger mixing, if for $l\geq 2$,
\begin{align*}
  \left|\gamma^{red}_{[l]}\right| \left( \R^{d(l-1)} \right)  <+\infty.
 \end{align*}
Brillinger mixing is  adapted to DPPs as shown by the following result.
\begin{theorem}[\cite{biscio_Brillinger:2016}]\label{thm:Brillinger}
Any DPP with kernel $C$ verifying  $\HC$ is Brillinger mixing.
\end{theorem}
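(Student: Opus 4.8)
The plan is to compute the factorial cumulant densities of the DPP explicitly and then reduce Brillinger mixing to an integrability property of convolution powers of the kernel. First I would combine the determinantal form of the product densities (Definition~\ref{def:DPP}) with the moment–cumulant relation implicit in the definition of $\gamma_{[l]}$. Expanding each determinant $\det[C](x_1,\dots,x_l)$ by the Leibniz formula and collecting terms, the ``disconnected'' contributions cancel against the products $\prod_i\alpha^{(|B_i|)}$ summed over partitions, and what survives are exactly the permutations whose cycle decomposition is a single $l$-cycle. This gives the cumulant density
\begin{align*}
 c_l(x_1,\dots,x_l) = (-1)^{l-1}\sum_{\sigma}\ \prod_{i=1}^{l} C(x_i - x_{\sigma(i)}),
\end{align*}
where $\sigma$ ranges over the $(l-1)!$ cyclic permutations of $\{1,\dots,l\}$; as a check, $c_2(x_1,x_2)=-C(x_1-x_2)^2$ and $c_3(x_1,x_2,x_3)=2\,C(x_1-x_2)C(x_2-x_3)C(x_3-x_1)$.

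Next I would translate the mixing condition. By stationarity the reduced cumulant measure $\gamma^{red}_{[l]}$ has Lebesgue density $y\mapsto c_l(y_1,\dots,y_{l-1},0)$, so that
\begin{align*}
 \bigl|\gamma^{red}_{[l]}\bigr|\bigl(\R^{d(l-1)}\bigr) = \int_{\R^{d(l-1)}} \bigl| c_l(y_1,\dots,y_{l-1},0)\bigr|\ \dd y_1\cdots \dd y_{l-1}.
\end{align*}
Bounding the sum over cycles by the triangle inequality and applying, for each fixed cycle, the telescoping substitution $w_1=x_1-x_2,\dots,w_{l-1}=x_{l-1}$ (using that $C$ is even), every cyclic term contributes the same quantity, leaving
\begin{align*}
 \bigl|\gamma^{red}_{[l]}\bigr|\bigl(\R^{d(l-1)}\bigr) \le (l-1)!\ g^{*l}(0), \qquad g:=|C|,
\end{align*}
with $g^{*l}$ the $l$-fold convolution. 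So it suffices to prove $g^{*l}(0)<\infty$ for every $l\ge 2$.

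For the \emph{signed} version this is immediate and is where $\HC$ enters cleanly: writing $\varphi=\F(C)$, Plancherel gives $C^{*l}(0)=\int_{\R^d}\varphi^l$, and since $0\le\varphi<1$ we have $\varphi^l\le\varphi^2$ for $l\ge2$, whence $\int\varphi^l\le\int\varphi^2=\int C^2<\infty$. Moreover, as $\varphi\ge0$ and $C$ is continuous, Bochner's theorem identifies $\varphi$ as the density of the finite spectral measure of $C$, so $\varphi\in L^1$ with $\int\varphi=C(0)=\lambda$; combined with $\varphi\le1$ this yields $\varphi\in L^p$ for all $p\in[1,\infty]$. The case $l=2$ of the required bound is likewise immediate, since $g^{*2}(0)=\int C^2<\infty$.

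The main obstacle is exactly the passage from the signed integral $C^{*l}(0)$ to its total-variation analogue $g^{*l}(0)$: for $l\ge3$ the density $c_l$ changes sign, and the spectral cancellation that rendered $\int\varphi^l$ harmless is lost. Equivalently one must control $g^{*l}(0)=\int_{\R^d}\F(|C|)^l$, and $\F(|C|)$ bears no simple relation to $\varphi$. The plan here is to iterate Cauchy--Schwarz, $g^{*l}(0)\le\|g^{*\lceil l/2\rceil}\|_2\,\|g^{*\lfloor l/2\rfloor}\|_2$, reducing matters to $g^{*k}\in L^2$, i.e.\ $\F(|C|)\in L^{2k}$, and then to establish this $L^p$-membership of $\F(|C|)$ by harmonic-analytic control of $|C|$ built from the spectral facts above ($\varphi\in L^1\cap L^\infty$ and $\varphi<1$). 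I expect this estimate on $\F(|C|)$ to be the delicate technical heart of the argument: the determinantal combinatorics and the reduction are routine by comparison, and it is the borderline integrability of $|C|$ (already visible for the sine kernel, for which $|C|\notin L^1(\R^d)$) that makes the total-variation bound genuinely nontrivial.
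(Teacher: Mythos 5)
The paper itself contains no proof of Theorem~\ref{thm:Brillinger}: the statement is imported from the cited reference, so there is no internal argument to compare yours with, and I assess your proposal against the claim itself. Up to your final paragraph everything is correct and is the natural route: the factorial cumulant density of a DPP is indeed the signed sum over $l$-cycles (your checks for $l=2,3$ are right, and the identity is classical), by stationarity $\gamma^{red}_{[l]}$ has density $(y_1,\dots,y_{l-1})\mapsto c_l(y_1,\dots,y_{l-1},0)$, and the triangle inequality plus the telescoping substitution give $|\gamma^{red}_{[l]}|(\R^{d(l-1)})\le (l-1)!\,g^{*l}(0)$ with $g=|C|$. Your diagnosis that the whole difficulty is the passage from the signed quantity $C^{*l}(0)=\int\F(C)^l$ to the absolute one $g^{*l}(0)$ is also exactly right.

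The gap is the last step, and it is not a repairable technicality: under $\HC$ alone the bound you are after is \emph{false}, so no estimate on $\F(|C|)$ can close it. Take $d=3$ and $C=\F^{-1}\bigl((1-\varepsilon)\1_{B(0,R)}\bigr)$ with $\varepsilon\in(0,1)$: this kernel is symmetric, continuous, in $L^2(\R^3)$, and $0\le\F(C)\le 1-\varepsilon<1$, so it satisfies $\HC$; its asymptotics are $C(x)=-\mathrm{const}\cdot|x|^{-2}\cos(2\pi R|x|)+O(|x|^{-3})$. On each region $A_k=\{(y_1,y_2):|y_1|,|y_2|\in[2^k,2^{k+1}]\}$, whose volume is of order $2^{6k}$, the product $|C(y_1)C(y_2)C(y_1-y_2)|$ is of order $2^{-6k}$ times oscillating factors whose averages stay bounded away from zero (a polar-coordinate computation, using that $[2^k,2^{k+1}]\subset[\,||y_1|-|y_2||,\;|y_1|+|y_2|\,]$ so the angular integral of $|C(y_1-y_2)|$ contributes $\gtrsim 2^{-2k}\int_{2^k}^{2^{k+1}} s|C(s)|\,\dd s \gtrsim 2^{-2k}$); hence each $A_k$ contributes at least a fixed positive constant and $g^{*3}(0)=\infty$. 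Moreover this defeats not just your bounding strategy but the statement as literally reproduced here: for $l=3$ both $3$-cycles give the \emph{same} product, so there is no cancellation and $|\gamma^{red}_{[3]}|(\R^{2d})=2\,g^{*3}(0)$ exactly; this admissible (Bessel-type, ``most repulsive'') DPP has a third reduced factorial cumulant measure of infinite total variation. The conclusion to draw is that Brillinger mixing requires a decay/integrability hypothesis on $C$ genuinely stronger than $C\in L^2$, and the exact hypotheses must be taken from the original reference rather than from $\HC$ as summarized in this paper. Once such decay is granted, your reduction finishes in one line and the ``delicate technical heart'' you anticipate disappears: if $C\in L^p(\R^d)$ for all $p\in(1,2]$ --- which holds whenever $|C(x)|\lesssim(1+|x|)^{-d}$, in particular for the sine kernel you mention --- then Young's convolution inequality gives $g^{*l}(0)\le\|g\|_{l/(l-1)}^l<\infty$ for every $l\ge2$, with no Fourier analysis of $|C|$ needed at all.
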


For $R>0$, $R$-dependence is simpler to define. A point process $\bX$ is  $R$-dependent if for all $A,B \in \mathcal B(\R^d)$ verifying $d(A,B)>R$, $\bX\cap A$ and $\bX \cap B$ are independent.
This criterion  is satisfied by the large subclass of DPPs with compactly supported kernel $C$. 
\begin{proposition} \label{prop:Rdependent}
Let $\bX\sim DPP(C)$ be a DPP with kernel $C$ verifying $\HC$ and such that $C(x)=0$ for $|x|>R$ for some $R>0$. Then, $\bX$ is $R$-dependent.
\end{proposition}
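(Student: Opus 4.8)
The plan is to exploit that a stationary DPP is entirely governed by its kernel $C$ through the determinantal product densities, and that the compact support of $C$ annihilates every interaction between $A$ and $B$ as soon as $d(A,B)>R$. Since independence of $\bX\cap A$ and $\bX\cap B$ concerns the joint law of the two restricted processes, and such a law is determined by its restrictions to bounded windows, I would first reduce to the case of compact $A$ and $B$, recovering arbitrary Borel sets at the end through an increasing monotone-limit argument; throughout I set $D=A\cup B$.

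The crucial observation is a block decomposition. For every $x\in A$ and $y\in B$ one has $|x-y|\ge d(A,B)>R$, whence $C(x-y)=0$. Consequently, for points $x_1,\dots,x_p\in A$ and $y_1,\dots,y_q\in B$, the matrix $[C](x_1,\dots,x_p,y_1,\dots,y_q)$ is block diagonal, its off-diagonal blocks vanishing, so its determinant factorizes:
\begin{equation*}
\rho_{p+q}(x_1,\dots,x_p,y_1,\dots,y_q)=\rho_p(x_1,\dots,x_p)\,\rho_q(y_1,\dots,y_q).
\end{equation*}
At the operator level, the integral operator $\mathcal C$ on $L^2(D)$ with kernel $C(x-y)$ then respects the orthogonal splitting $L^2(D)=L^2(A)\oplus L^2(B)$ and decomposes as $\mathcal C=\mathcal C_A\oplus\mathcal C_B$. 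By Proposition~\ref{DPPexistence} the eigenvalues of $\mathcal C$ lie in $[0,1]$, so $0\le\mathcal C\le I$ and $\bX\cap D$ is a DPP whose generating functional is the Fredholm determinant $\E\prod_{x\in\bX\cap D}(1-v(x))=\mathrm{Det}(I-\sqrt{v}\,\mathcal C\,\sqrt{v})$ for $0\le v\le1$ on $D$ (see \cite{hough2009zeros}). Writing $v=v_A+v_B$ with $v_A,v_B$ supported on $A$ and $B$, the block-diagonal form of $\sqrt{v}\,\mathcal C\,\sqrt{v}$ makes this determinant split into the product of the determinants over $L^2(A)$ and $L^2(B)$, which are precisely the generating functionals of $\bX\cap A$ and $\bX\cap B$. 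As the generating functional characterizes the law of a point process on a bounded set, this factorization is exactly the independence we seek.

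The block algebra is immediate; the delicate point is converting a factorized moment/generating structure into genuine stochastic independence. Routing the argument through the generating functional settles this cleanly, because the Fredholm determinant is a well-defined object that factorizes verbatim under the block-diagonal splitting. Were one to argue only from the factorized product densities, one would additionally have to justify that the DPP is determined by its factorial moments—a moment-determinacy argument resting on the growth bound $|\det[C](x_1,\dots,x_\ell)|\le \ell^{\ell/2}\lambda^\ell$, obtained from Hadamard's inequality together with $|C|\le C(0)=\lambda$—and then control the expansion of the generating functional in these densities. A short monotone-limit step finally lifts the conclusion from compact to arbitrary Borel sets $A$ and $B$. I expect this last determinacy/limiting bookkeeping, rather than the determinant factorization, to be the main obstacle.
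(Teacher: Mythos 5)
Your proof is correct, and it takes a genuinely different route from the paper's. The paper argues via densities: for $S=A\cup B$, $\bX\cap S$ has density $f_S\propto\det[\tilde C]$ with respect to the unit-rate Poisson process on $S$, where $\tilde C=\sum_{k\ge1}C^S_k$ is built by iterated integration of $C$ over $S$; since each intermediate integration variable is confined to $A\cup B$, an induction shows the cross-blocks of $[\tilde C]$ vanish when $d(A,B)>R$, so the density factorizes as $f_S\propto f_A\, f_B$ and independence follows. You instead work at the level of the generating functional: block-diagonality of the integral operator (immediate from $C(x-y)=0$ for $x\in A$, $y\in B$) factorizes the Fredholm determinant, and the generating functional characterizes the law, which is exactly independence. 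As for what each approach buys: the paper's route uses only the density formula it has already quoted, but it genuinely requires the strict condition $\F(C)<1$ in $\HC$ for that density to exist, and its key step---the vanishing of the cross-terms of $\tilde C$---is not automatic (iterated convolutions normally enlarge supports; it works only because the intermediate variables stay inside $A\cup B$). Your route imports the Fredholm-determinant representation of the generating functional from \cite{hough2009zeros}, but the splitting $\mathcal C=\mathcal C_A\oplus\mathcal C_B$ is then immediate, and the argument goes through under mere existence, $0\le\F(C)\le1$, so it proves a slightly stronger statement than Proposition~\ref{prop:Rdependent} as stated. Your caution about converting factorized product densities into independence is well-placed---a moment-determinacy argument would indeed be needed on that route---and passing through the generating functional correctly avoids it. One simplification: since the paper's $R$-dependence only involves \emph{bounded} Borel sets and $d(\bar A,\bar B)=d(A,B)>R$, the final lifting from compact to general bounded Borel $A,B$ needs no monotone-limit argument; passing to closures and restricting suffices.
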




\begin{proof}
Let $A$ and $B$ be two compact subsets in $\R^d$ such that $d(A,B)>R$. We first need to remind briefly the definition of the density for a DPP. More details can be found in \cite{macchi:75} and \cite{lavancier:moller:rubak:15}, particularly in its supplementary materials. 
By assumption $\HC$, $\bX  \cap S \sim DPP(C )  \cap S $ is absolutely continuous with respect to the homogeneous Poisson process on $S$ with unit intensity and has density
 \begin{align}
  f_S(\lbrace x_1,\ldots,x_n \rbrace) = e^{|S|}P(N(\bX \cap S)=0) \ \det [\tilde{C}] (x_1 , \ldots, x_n), \quad x_1,\dots,x_n \in S \label{eq:density}
 \end{align}
 where $\tilde{C}$ is defined by
\begin{align}\label{eq:Ctilde}
 \tilde{C}(x -y) = \sum_{k=1}^{\infty} C_k^S(x-y)
\end{align}
with  $C_1^S(x-y) = C(x-y)$  and $C_k^S(x-y) = \int_S  C_{k-1}^S (x-z) C(z-y) \dd z$ for $k>1$, see Appendix G in the supplementary materials of  \cite{lavancier:moller:rubak:15}.
By induction, if $C$ is compactly supported, so is $\tilde{C}$. Now, let $x_1,\dots, x_p \in A$ and $y_1,\dots,y_q\in B$, for $p,q\geq 1$. Since $\tilde{C}(x) = 0$ for $|x|>R$,   $[\tilde{C}] (x_1,\ldots,x_p,y_1,\ldots,y_q)$ is a block diagonal matrix. Then, by applying~\eqref{eq:density} with  $S = A \cup B$, it is straightforwardly seen that
 \begin{align*}
  f_S(\lbrace x_1,\ldots,x_p,y_1,\ldots,y_q \rbrace) \propto f_A(\lbrace x_1,\ldots,x_p \rbrace ) f_B(\lbrace y_1,\ldots,y_q \rbrace )
 \end{align*}
where the normalizing constant is determined by the condition $\int f_S = 1$, whereby we deduce the result.
\end{proof}

For some statistical applications, another type of mixing coefficient often used is the $\alpha$-mixing coefficient defined as follows for spatial point processes (see e.g.\ \cite{politis:98}): let $j,k\geq 1$ and $m>0$
\begin{align}
  \alpha_{j,k}(m)=\sup \{&  
|P(A\cap B) - P(A)P(B)|:\, A\in \mathcal{F}(\Lambda_1),\, B\in \mathcal{F}(\Lambda_2), \nonumber\\
& \Lambda_1\in \mathcal B(\R^d),\,\Lambda_2 \in \mathcal B(\R^d),\, |\Lambda_1|\leq j,\, |\Lambda_2|\leq k,\, d(\Lambda_1,\Lambda_2)\geq m
  \} \label{def:mixing}
\end{align}
where $\mathcal{F}(\Lambda_i)$ is the $\sigma$-algebra generated by
$\bX\cap \Lambda_i$, $i=1,2$. It is still an open question to know whether there are general conditions on the kernel $C$ of a DPP providing a control of $\alpha$-mixing coefficients. However, an obvious consequence of Proposition~\ref{prop:Rdependent} is that for DPPs with compactly supported kernel, $\alpha_{j,k}(m)=0$ for any $m>R$ and any $j,k\geq 1$.

\subsection{On the distribution of the number of points}

For general point processes, it is sometimes not easy to see what the distribution of the number of points in a compact set $S$ is. For DPPs, we can actually show that this distribution is, for large compact set $S$, quite close to the probability distribution of a Poisson variable. This interesting behaviour will be exploited in Section~\ref{sec:median}. We let $\Pi(\theta)$ denote a Poisson random variable with parameter $\theta$. The following proposition is based on results obtained by~\cite{zacharovas:hwanf:2010}.

\begin{proposition}\label{prop:ZW}
  Let $\bX$ be a DPP with kernel $C$ verifying $\HC$. Define for any $m\geq 0$
\begin{align*}
d_0(S,m) &=  \P(N(\bX\cap S)=m) - \P(\Pi(\lambda |S|)=m)  \\
d_1(S,m) &= \P(N(\bX\cap S)=m) - \P(\Pi(\lambda |S|)=m) \left( 1-|S|\omega(m,\lambda|S|) \check C_0/2 \right)
\end{align*}
where $\check C_0=\int_{\R^d}C^2(x)\dd x$ and $\omega(m,\ell) = ((m-\ell)^2-m)/\ell^2$ for any $m\geq 0$ and $\ell>0$. Then, there exists three constants  $\kappa_0,\kappa_1$ and $\kappa_1^\prime$, independent of $m$, such that  for all compact $S\subset \R^d$ we have
\begin{equation}\label{eq:d0d1}
  |d_0(S,m)| \leq \frac{\kappa_0}{\sqrt{|S|}} \qquad \mbox{ and } \qquad |d_1(S,m)| \leq \frac{\kappa_1}{\sqrt{|S|}} + \frac{\kappa_1^\prime}{|S|}.
\end{equation}
In particular, 
\begin{equation}\label{eq:k0k1}
  \kappa_0 = \sqrt{3}(\sqrt{e}-1) \frac{ \check C_0\sqrt{\lambda}}{(\lambda- \check C_0)^2} \qquad \mbox{ and } \qquad
  \kappa_1  = \frac{\sqrt{15}(\sqrt{e}-1)}2 \frac{  \check C_0^2 \sqrt{\lambda}}{(\lambda- \check C_0)^3} . 
\end{equation}
\end{proposition}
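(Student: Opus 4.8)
The plan is to exploit the fundamental representation of the count of a determinantal point process as a sum of independent Bernoulli variables, and then to invoke the generating-function / Poisson--Charlier expansion of \cite{zacharovas:hwanf:2010}. First I would fix a compact $S$ and apply Mercer's expansion~\eqref{eq:spectral} to the restriction of $C$ to $S\times S$, so that the integral operator $\mathcal C_S$ with kernel $C(x-y)$ on $L^2(S)$ has eigenvalues $\beta^S_k$. By the representation of DPP counts as sums of independent Bernoulli variables (see \cite{hough2009zeros} and the supplementary material of \cite{lavancier:moller:rubak:15}), $N(\bX\cap S)$ has the same law as $\sum_{k\ge 1}B_k$, where the $B_k$ are independent with $\P(B_k=1)=\beta^S_k$. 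Writing $\theta=\sum_k\beta^S_k$, $S_2=\sum_k(\beta^S_k)^2$ and $\sigma^2=\sum_k\beta^S_k(1-\beta^S_k)$, I would record the elementary identities and bounds
\begin{align*}
\theta &= \operatorname{tr}(\mathcal C_S)=\int_S C(0)\,\dd x=\lambda|S|, \\
S_2 &=\int_S\int_S C^2(x-y)\,\dd x\,\dd y\le |S|\,\check C_0, \\
\sigma^2 &=\theta-S_2\ge(\lambda-\check C_0)|S|,
\end{align*}
where $\check C_0<\lambda$ (hence $\sigma^2>0$) follows from $\lambda-\check C_0=\int_{\R^d}\F(C)(1-\F(C))\,\dd t>0$ by Plancherel and $0\le\F(C)<1$. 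I would also note the uniform spectral bound $\max_k\beta^S_k\le\sup_t\F(C)(t)<1$, coming from $\mathcal C_S=P_S\mathcal C P_S$ with $\mathcal C$ the full convolution operator of symbol $\F(C)$ and $\|\mathcal C\|=\sup\F(C)$; this is what guarantees, uniformly in $S$, that the Bernoulli probabilities stay bounded away from $1$.

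Next I would set up the comparison at the level of generating functions. The probability generating function of $N(\bX\cap S)$ is $g(z)=\prod_k(1+\beta^S_k(z-1))$, while that of $\Pi(\lambda|S|)$ is $e^{\theta(z-1)}$; taking logarithms and expanding $\log(1+\beta^S_k w)$ in $w=z-1$ gives $g(z)=e^{\theta(z-1)}\exp\big(-\tfrac12 S_2(z-1)^2+\tfrac13 S_3(z-1)^3-\cdots\big)$ with $S_j=\sum_k(\beta^S_k)^j$. Coefficient extraction $\P(N=m)=[z^m]g(z)$ is then organised as a Poisson--Charlier expansion, the crucial computation being the identity
\begin{align*}
\P(\Pi(\theta)=m)\,\omega(m,\theta)=[z^m]\,(z-1)^2e^{\theta(z-1)},
\end{align*}
a one-line manipulation of $e^{-\theta}\theta^m/m!$ which is exactly the degree-two correction appearing in $d_1$. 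Truncating the exponential at zeroth and first order and reading off coefficients is precisely the mechanism behind \cite{zacharovas:hwanf:2010}, whose quantitative, uniform-in-$m$ error bounds I would apply: they control $\P(N=m)-\P(\Pi(\theta)=m)$ and its first-order refinement in terms of $S_2$, $S_3$, $\theta$ and $\sigma^2$.

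It then remains to substitute the moment estimates from the first step. For $d_0$ I would use a Zacharovas--Hwang bound of the form $|d_0(S,m)|\le c\,S_2\sqrt{\theta}/\sigma^4$ with $c=\sqrt3(\sqrt e-1)$; inserting $S_2\le|S|\check C_0$, $\theta=\lambda|S|$ and $\sigma^2\ge(\lambda-\check C_0)|S|$ collapses the $|S|$ powers to a single $|S|^{-1/2}$ and yields $\kappa_0=\sqrt3(\sqrt e-1)\check C_0\sqrt\lambda/(\lambda-\check C_0)^2$. For $d_1$ I would subtract the first-order term $-\tfrac12 S_2\,\omega(m,\theta)\,\P(\Pi(\theta)=m)$, replace $S_2$ by $|S|\check C_0$ (the discrepancy $|S|\check C_0-S_2=\int_S\int_{S^c}C^2(x-y)\,\dd y\,\dd x$ being folded into the error), and bound the residual $S_2^2$- and $S_3$-terms; the $S_2^2$ contribution, of order $S_2^2\sqrt\theta/\sigma^6$, produces the $\kappa_1/\sqrt{|S|}$ term with $\kappa_1=\tfrac{\sqrt{15}}{2}(\sqrt e-1)\check C_0^2\sqrt\lambda/(\lambda-\check C_0)^3$, while the remaining pieces give the $\kappa_1'/|S|$ term.

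The main obstacle I anticipate is not the structure but the bookkeeping of universal constants and the uniformity. Verifying the hypotheses of \cite{zacharovas:hwanf:2010} needs the uniform-in-$S$ control $\max_k\beta^S_k<1$ and the moment bounds above, but the truly delicate part is tracking the complex-analytic (saddle-point/contour) error estimates so as to obtain the sharp numerical factors $\sqrt3(\sqrt e-1)$ and $\tfrac{\sqrt{15}}{2}(\sqrt e-1)$ and to ensure the bounds hold for every compact $S$ and every $m\ge 0$ simultaneously, rather than merely asymptotically; the replacement $S_2\rightsquigarrow|S|\check C_0$ must likewise be carried out without spoiling the $1/\sqrt{|S|}$ rate.
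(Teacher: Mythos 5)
Your proposal is correct and takes essentially the same route as the paper's own proof: the Bernoulli-sum representation of $N(\bX\cap S)$ through the Mercer eigenvalues $\beta_k^S$, the moment identities $\theta_1=\lambda|S|$, $\theta_2=\int_{S^2}C^2(x-y)\,\dd x\,\dd y\le |S|\check C_0$, positivity of $\lambda-\check C_0$ via Plancherel and $0\le\F(C)<1$, and then the quantitative bounds of \citet{zacharovas:hwanf:2010} (their Theorems 3.4 and 4.2) with exactly the constants you name, including absorbing the $\theta_3$-term into $\kappa_1'/|S|$. The only differences are cosmetic: the paper first truncates to the finite Poisson--Binomial sum $N_n(\bX\cap S)=\sum_{k\le n}\mathcal B(\beta_k^S)$, applies the bounds, and lets $n\to\infty$, whereas you work directly with the infinite sum, and your generating-function/Poisson--Charlier digression merely re-derives machinery that the citation already supplies.
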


\section{Estimators of $\lambda$}

We are interested in the estimation of the intensity parameter $\lambda$ based on a single realization of a DPP, $\bX$, observed on an increasing sequence of bounded domains $W_n \subset \R^d$. The standard estimator  is considered in Section~\ref{sec:std} while the  median-based estimator is studied in Section~\ref{sec:median}.

\subsection{Standard estimator of $\lambda$} \label{sec:std}

In this section, we assume  that
$\lbrace W_n \rbrace_{n\geq 1}$ is a sequence of bounded convex subsets of $\R^d$ such that for all $n\geq 1$, $W_n \subset W_{n+1}$ and there exists an Euclidean ball  included  in $W_n$ with radius denoted by $r(W_n)$ tending to infinity as $n$ tends to infinity. To shorten, such a sequence $\lbrace W_n \rbrace_{n\in \N}$ is said to be regular.

For a point process $\bX$, the standard estimator of $\lambda$  is given by
 \begin{align}\label{eq:std}
 \widehat{\lambda}_n^{\mathrm{std}} =\frac{N(\bX \cap W_n)}{|W_n|} .
\end{align}
Further, it is well-known that if the stationary point process is ergodic, a property established by \cite{Soshnikov:00} for stationary DPPs, then this unbiased estimator is strongly consistent as $n\to \infty$. Using the Brillinger mixing property (Theorem~\ref{thm:Brillinger}), we obtain the following result.

\begin{proposition}[\cite{biscio_Brillinger:2016,SoshnikovGaussianLimit}]\label{prop:std}
Let $\bX$ be a stationary DPP with pair correlation function $g$,  kernel $C$ verifying $\HC$  and  let $\lbrace W_n\rbrace_{n\in \N}$ be a regular sequence of subsets of $\R^d$. Then, as $n\to \infty$
\begin{align*}
\sqrt{|W_n|} \left(  \widehat \lambda_n^{\mathrm{std}}- \lambda \right) \to  \mathcal N(0,\sigma^2)
\end{align*}
in distribution, where  $\sigma^2=   \lambda + \lambda^2 \int_{\R^d} (g(w) - 1)dw$. In particular,  for $\bX\sim DPP(C)$, we have  $\sigma^2= \lambda -\check C_0= \lambda -\int_{\R^d} C(x)^2 \dd x$.
\end{proposition}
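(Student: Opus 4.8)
The plan is to reduce the statement to a central limit theorem for the counting variable and then prove the latter by the method of cumulants, the Brillinger mixing property (Theorem~\ref{thm:Brillinger}) being exactly what makes this method work. Writing $N_n=N(\bX\cap W_n)$, we have the exact identity $\sqrt{|W_n|}\,(\widehat\lambda_n^{\mathrm{std}}-\lambda)=(N_n-\lambda|W_n|)/\sqrt{|W_n|}$, so it suffices to establish asymptotic normality of the recentred, rescaled count. First I would pin down the first two moments. By definition of the intensity $\E N_n=\lambda|W_n|$, so the estimator is unbiased; and from the second factorial moment measure together with stationarity, $\rho_2(u,v)=\lambda^2 g(u-v)$, one gets
\[
\Var(N_n)=\lambda|W_n|+\lambda^2\int_{W_n}\int_{W_n}\bigl(g(u-v)-1\bigr)\,\dd u\,\dd v .
\]
The change of variables $w=u-v$ rewrites the double integral as $\int_{\R^d}(g(w)-1)\,|W_n\cap(W_n+w)|\,\dd w$; dividing by $|W_n|$, using that $|W_n\cap(W_n+w)|/|W_n|\to 1$ for each fixed $w$ (here the regularity of $\{W_n\}$ makes boundary effects negligible) and that $\int_{\R^d}|g(w)-1|\,\dd w<\infty$ (a consequence of Brillinger mixing, i.e.\ of the finiteness of $|\gamma^{red}_{[2]}|(\R^d)$), dominated convergence yields $\Var(N_n)/|W_n|\to\lambda+\lambda^2\int_{\R^d}(g(w)-1)\,\dd w=\sigma^2$.

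Next I would control the higher cumulants. For each fixed $l\ge2$, the ordinary cumulant $\mathrm{cum}_l(N_n)$ of the counting variable is a linear combination, with Stirling-number coefficients, of the factorial cumulant moments $\gamma_{[j]}(W_n^{\,j})$, $j\le l$. Using the reduced form $\gamma_{[j]}(\prod_i A_i)=\int\gamma^{red}_{[j]}(\prod_i(A_i-x))\,\dd x$ with all $A_i=W_n$, each such term is bounded by $|W_n|\cdot|\gamma^{red}_{[j]}|(\R^{d(j-1)})$, which is finite precisely because $\bX$ is Brillinger mixing. Hence $\mathrm{cum}_l(N_n)=O(|W_n|)$ for every fixed $l$, so that
\[
\mathrm{cum}_l\!\left(\frac{N_n-\lambda|W_n|}{\sqrt{|W_n|}}\right)=\frac{\mathrm{cum}_l(N_n)}{|W_n|^{l/2}}=O\bigl(|W_n|^{1-l/2}\bigr),
\]
which tends to $0$ for all $l\ge3$, while the second cumulant converges to $\sigma^2$ and the first is $0$. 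Since these are exactly the cumulants of $\mathcal N(0,\sigma^2)$, a distribution determined by its moments, convergence of cumulants gives convergence in distribution.

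It then remains to specialise to $\bX\sim DPP(C)$. From Definition~\ref{def:DPP}, $\rho_2(x,y)=C(0)^2-C(x-y)^2=\lambda^2-C(x-y)^2$, so $g(w)=1-C(w)^2/\lambda^2$ and therefore $\lambda^2\int_{\R^d}(g(w)-1)\,\dd w=-\int_{\R^d}C(w)^2\,\dd w=-\check C_0$, giving $\sigma^2=\lambda-\check C_0$. Positivity of the limiting variance follows from $\HC$: by Fourier inversion $\lambda=C(0)=\int_{\R^d}\F(C)$ (valid since $\F(C)\ge0$ is integrable with integral $C(0)<\infty$) and by Plancherel $\check C_0=\int_{\R^d}\F(C)^2$, so the strict bound $\F(C)<1$ forces $\F(C)^2<\F(C)$ on the set where $\F(C)>0$ and hence $\check C_0<\lambda$, i.e.\ $\sigma^2>0$. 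This is exactly the point where the strict inequality in $\HC$ is used.

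I expect the main obstacle to be the cumulant step: the combinatorial bookkeeping that expresses the ordinary cumulants of $N_n$ through the factorial cumulant measures, uniformly in the order, so as to obtain the bound $\mathrm{cum}_l(N_n)=O(|W_n|)$ and thereby the vanishing of all normalised cumulants of order $\ge3$. The variance limit and the DPP specialisation are, by comparison, routine computations.
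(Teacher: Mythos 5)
Your proposal is correct, and it follows essentially the same route that the paper relies on: the paper states this proposition by citation to Soshnikov's Gaussian limit theorem and to the Brillinger-mixing paper of Biscio and Lavancier, and the argument underlying those references is precisely your reduction to the counting variable plus the cumulant method, where Brillinger mixing (Theorem~\ref{thm:Brillinger}) gives $\mathrm{cum}_l(N(\bX\cap W_n))=O(|W_n|)$ via the Stirling-number expansion in the factorial cumulant measures, so all normalised cumulants of order at least $3$ vanish. Your variance computation, the identification $g(w)=1-C(w)^2/\lambda^2$ giving $\sigma^2=\lambda-\check C_0$, and the strict positivity $\check C_0<\lambda$ from $0\leq\F(C)<1$ (the same Parseval argument the paper uses in its proof of Proposition~\ref{prop:ZW}) are all sound.
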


The last result is not restricted to  DPPs and is valid for a lot of spatial point processes (including some Cox processes, Gibbs point processes,\dots), up to the form of $\sigma^2$ that is in general known only in terms of the pair correlation function. 
The estimation of $\sigma^2$  has therefore been an important topic. We refer the reader to \cite{heinrich:prokevsova:10} for a discussion of this challenging topic. In the latter paper,  the following estimator is proposed.
\begin{align*}
 \widehat{\sigma}_n^2 = \widehat{\lambda}_n^{\mathrm{std}} + \mathop{\sum\nolimits\sp{\ne}}_{x,y \in \bX \cap W_n} \frac{k\left(\frac{y-x}{|W_n|^{1/d}b_n}\right)  }{|(W_n-x) \cap (W_n-y)  |} - |W_n|b_n^d \widehat \lambda_n^{\mathrm{std}}(\widehat \lambda_n^{\mathrm{std}}-|W_n|^{-1})  \int_{W_n} k(x) \, \mathrm dx
\end{align*}
  where $k:\R^d \to [0,\infty)$ plays the role of a kernel and  $\{b_n\}_{n\geq 1}$ is a sequence of real numbers playing the role of a bandwidth. \cite{heinrich:prokevsova:10} obtained in particular the following result which can be directly applied to DPPs with kernel $C$ satisfying $\HC$.

\begin{proposition}[\cite{heinrich:prokevsova:10}]
Let $k:\R^d\to [0,\infty)$ be a symmetric, bounded and continuous function at the origin of $\R^d$ such that $k(0)=1$. Let $\{W_n\}_{n\geq 1}$ be a sequence of regular subsets of $\R^d$ and $\{b_n\}_{n\geq 1}$ be a sequence of real numbers such that as $n\to \infty$
\[
  b_n \to 0, \quad b_n^2 |W_n|^{1/d} \to 0 \quad \mbox{ and } \quad b_n |W_n|^{1/d}r(W_n)^{-1}\to 0
\]
where $r(W_n)$ stands for the inball radius of $W_n$. If $\bX$ is a Brillinger mixing point process, then as $n\to \infty$, $\widehat \sigma_n^2 \to \sigma^2$ in $L^2$.
\end{proposition}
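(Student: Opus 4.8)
The plan is to establish $L^2$ convergence through the standard decomposition $\E[(\widehat\sigma_n^2-\sigma^2)^2]=\Var(\widehat\sigma_n^2)+(\E[\widehat\sigma_n^2]-\sigma^2)^2$, so it suffices to prove asymptotic unbiasedness and vanishing variance separately. Throughout I write $h_n:=|W_n|^{1/d}b_n$ for the effective bandwidth and $\gamma_{W_n}(u):=|W_n\cap(W_n+u)|=|(W_n-x)\cap(W_n-y)|$ (with $u=x-y$) for the set covariance of $W_n$. The bandwidth conditions enter through the operative scaling in which $h_n\to\infty$ while $h_n=o(r(W_n))$, so that on the support of $u\mapsto k(u/h_n)$ one has $\gamma_{W_n}(u)\sim|W_n|$ uniformly, with no boundary interference.

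\textbf{Bias.} First I would compute $\E[\widehat\sigma_n^2]$ term by term. The term $\widehat\lambda_n^{\mathrm{std}}$ is unbiased and contributes $\lambda$. For the double sum, the second-order Campbell formula~\eqref{eq:factorial} gives
\begin{align*}
  \E\mathop{\sum\nolimits\sp{\ne}}_{x,y\in\bX\cap W_n}\frac{k((y-x)/h_n)}{\gamma_{W_n}(x-y)}
  =\int_{W_n}\int_{W_n}\frac{k((y-x)/h_n)}{\gamma_{W_n}(x-y)}\,\rho_2(x,y)\,\dd x\,\dd y .
\end{align*}
Using stationarity ($\rho_2(x,y)=\lambda^2 g(y-x)$) and the change of variables $w=y-x$, the inner $x$-integration produces exactly the factor $\gamma_{W_n}(w)$, which cancels the denominator and leaves $\int_{\R^d}k(w/h_n)\,\lambda^2g(w)\,\1[\gamma_{W_n}(w)>0]\,\dd w$. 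The third term is handled via $\widehat\lambda_n^{\mathrm{std}}(\widehat\lambda_n^{\mathrm{std}}-|W_n|^{-1})=N(N-1)/|W_n|^2$ with $N=N(\bX\cap W_n)$, whose expectation is $\alpha^{(2)}(W_n\times W_n)/|W_n|^2$; the same change of variables shows that, at leading order, this term subtracts precisely the divergent contribution $\int_{\R^d}k(w/h_n)\lambda^2\,\dd w$ arising from the constant part $g\equiv1$. The two divergent pieces cancel and
\begin{align*}
  \E[\widehat\sigma_n^2]=\lambda+\int_{\R^d}k(w/h_n)\,\lambda^2\bigl(g(w)-1\bigr)\,\dd w+o(1).
\end{align*}
Since $k$ is bounded and $k(w/h_n)\to k(0)=1$ pointwise as $h_n\to\infty$, and since $w\mapsto\lambda^2(g(w)-1)=\rho_2(0,w)-\lambda^2$ is integrable (this is exactly $|\gamma^{red}_{[2]}|(\R^d)<\infty$, guaranteed by Brillinger mixing, Theorem~\ref{thm:Brillinger}), dominated convergence yields $\E[\widehat\sigma_n^2]\to\lambda+\lambda^2\int_{\R^d}(g(w)-1)\,\dd w=\sigma^2$.

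\textbf{Variance and main obstacle.} The heart of the proof is showing $\Var(\widehat\sigma_n^2)\to0$. The dominant stochastic object is the double sum $U_n=\mathop{\sum\nolimits\sp{\ne}}_{x,y}k((y-x)/h_n)/\gamma_{W_n}(x-y)$, whose variance is a quadratic functional of $\bX$ and therefore expands, via~\eqref{eq:factorial}, into integrals of the weights against $\alpha^{(2)},\alpha^{(3)},\alpha^{(4)}$ according to whether the two summation index-pairs share $0$, $1$ or $2$ points. Re-expressing these through the reduced factorial cumulant measures $\gamma^{red}_{[2]},\gamma^{red}_{[3]},\gamma^{red}_{[4]}$ isolates the fluctuating part, the pure $\lambda$-power contributions reconstructing $(\E U_n)^2$ and cancelling. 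Using $\gamma_{W_n}\sim|W_n|$ on the kernel support together with one free $x$-integration (a factor $|W_n|$), the two-point diagonal term scales like $|W_n|^{-2}\cdot|W_n|\int k^2(w/h_n)\,\dd w=|W_n|^{-1}h_n^d\int k^2=b_n^d\int k^2\to0$, while each order-$3$ and order-$4$ cumulant term scales like $|W_n|^{-2}\cdot|W_n|\cdot|\gamma^{red}_{[l]}|(\R^{d(l-1)})=O(|W_n|^{-1})\to0$, the reduced cumulants being finite by Brillinger mixing. The covariances of $U_n$ and $\widehat\lambda_n^{\mathrm{std}}$ with the $\widehat\lambda_n^{\mathrm{std}}$-correction are treated analogously. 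I expect the main difficulty to be precisely this variance expansion: controlling the edge-correction denominator $\gamma_{W_n}$ uniformly (it degenerates near $\partial W_n$, which is exactly why $h_n=o(r(W_n))$ is imposed) and verifying that \emph{every} cumulant contribution, including the cross terms governed by the condition $b_n^2|W_n|^{1/d}\to0$, is $o(1)$ under the three simultaneous bandwidth constraints.
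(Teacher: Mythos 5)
The first thing to note is that the paper does not prove this proposition at all: it is imported verbatim from \citet{heinrich:prokevsova:10}, and the paper's sole contribution here is the observation that a DPP with kernel satisfying $\HC$ is Brillinger mixing (Theorem~\ref{thm:Brillinger}), so that the cited result applies. Your attempt therefore has to be measured against the strategy of that reference, and in outline it does match it: bias--variance decomposition, Campbell/cumulant expansion of the moments, and finiteness of the total variations $|\gamma^{red}_{[l]}|(\R^{d(l-1)})$ supplied by Brillinger mixing. Your bias computation is correct at the level of detail given (modulo edge effects and the tail of $k$: your uniform equivalence $\gamma_{W_n}(u)\sim|W_n|$ ``on the support of $k(\cdot/h_n)$'' presupposes a compactly supported $k$, which is not among the stated hypotheses).

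The variance part, however, contains a genuine gap, located exactly at the step you defer as ``the main difficulty''. Your master scaling --- each order-$3$ and order-$4$ term is $|W_n|^{-2}\cdot|W_n|\cdot|\gamma^{red}_{[l]}|(\R^{d(l-1)})=O(|W_n|^{-1})$ --- is valid only when the cumulant block pins \emph{all} the kernel arguments; every kernel integration left unpinned produces a factor $h_n^d=|W_n|b_n^d$, not a constant. Two dominant families of terms escape your accounting. First, in the three-point configurations of $\E[U_n^2]$ the pure-intensity part $\lambda^3$ of $\rho_3$ is \emph{not} cancelled by $(\E U_n)^2$ (only the four-point product $\rho_2\otimes\rho_2$ is) and contributes
\begin{equation*}
\lambda^3\int_{W_n}\left(\int\frac{k(w/h_n)}{\gamma_{W_n}(w)}\,\dd w\right)^{2}\dd x
\;\asymp\;\lambda^3\,\frac{h_n^{2d}}{|W_n|}\left(\int_{\R^d}k\right)^{2}
\;=\;\lambda^3\,|W_n|\,b_n^{2d}\left(\int_{\R^d}k\right)^{2}.
\end{equation*}
Second, in the four-point part the cross term $\lambda^2 c_2(x-u)$, with $c_2$ the density of $\gamma^{red}_{[2]}$, pins only $u$ to $x$, while $y$ and $v$ are localized only by the two kernels; it is likewise of order $|W_n|b_n^{2d}$, not $O(|W_n|^{-1})$. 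Since $|W_n|b_n^{2d}=(b_n^2|W_n|^{1/d})^{d}$, these dominant terms vanish precisely and only because of the hypothesis $b_n^2|W_n|^{1/d}\to0$; as written, your bounds ($b_n^d$ for the diagonal, $O(|W_n|^{-1})$ for everything else) would make that hypothesis superfluous, which is a reliable sign that the bookkeeping is wrong. The same $|W_n|b_n^{2d}$ order appears in the variance of the correction term (because $\Var(N(N-1))=O(|W_n|^{3})$) and in its covariance with $U_n$, which you dismiss as ``treated analogously''; bounding these terms --- or exhibiting their mutual cancellation --- is the actual substance of the proof and is missing from the sketch.
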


\subsection{Median-based estimator of $\lambda$} \label{sec:median}

For any real-valued random variable $Y$, we denote by $F_Y(\cdot)$ its cdf, by $F_Y^{-1}(p)$ its quantile of order $p\in (0,1)$ and by $\Me_Y=F_Y^{-1}(1/2)$ its theoretical median. Based on a sample $\mathbf{Y}=(Y_1,\dots,Y_n)$ of $n$ identically distributed random variables, we denote by $\Fe(\cdot;\mathbf Y)$ the empirical cdf, and by $\Fe^{-1}(p;\mathbf Y)$ the sample quantile of order $p$ given by
\begin{equation} \label{def:quantile}
  \Fe^{-1}(p;\mathbf Y) = \inf \{ x\in \R: p \leq \Fe(x;\bY)\}.
\end{equation}
The sample median is simply denoted by $\Mee(\mathbf Y)=\Fe^{-1}(1/2;\mathbf Y)$.

In this section, we assume the following assumption, denoted by $\HW$ for the sequence of bounded domain $\{W_n\}_{n\geq 1}$. \medskip

\noindent{$\HAW$}: The domain of observation $W_n$ can be decomposed as $W_n= \cup_{k\in \mathcal K_n} C_{n,k}$ where the cells $C_{n,k}$ are non-overlapping and equally sized with  volume $c_n=|C_{n,k}|$ and where $\mathcal K_n$ is a subset of $\Z^d$ with cardinality $k_n=|\mathcal K_n|$. As $n\to \infty$, $k_n\to\infty$, $c_n\to \infty$.

The standard estimator of $\lambda$ is given by~\eqref{eq:std}. To define a more robust one, we can note that
\begin{equation}\label{eq:mean}
  \widehat\lambda_n^{\mathrm{std}} = \frac1{k_n} \; \sum_{k\in \mathcal K_n} \frac{N(\bX \cap C_{n,k})}{c_n} 
\end{equation}
since $|W_n|=k_n c_n$, i.e. $\widehat \lambda_n^{\mathrm{std}}$ is nothing else than the sample mean of intensity estimators computed in cells $C_{n,k}$. The strategy adopted by \cite{coeurjolly:15} was to replace the sample mean by the sample median, which is known to be more robust to outliers. Quantile estimators based on count data or more generally on discrete data can cause some troubles in the asymptotic theory (see e.g. \cite{david:nagaraja:03}). To bypass the discontinuity problem of the count variables $N(\bX \cap C_{n,k})$, we follow a well-known technique (e.g. \citet{machado:silva:05}) which introduces smoothness. Let $(U_k, k\in \mathcal K_n)$ be a collection of independent and identically distributed  random variables, distributed as $U\sim \mathcal U([0,1])$. Then, for any $k\in \mathcal K_n$, we define 
\begin{equation}\label{eq:Znk}
  Z_{n,k}  = N(\bX \cap C_{n,k}) + U_k \quad \mbox{ and } \quad 
  \bZ=(Z_{n,k}, \; k\in \mathcal K_n).
\end{equation}
By $\HAW$ and the stationarity of $\bX$, the variables $Z_{n,k}$ are identically distributed and we let $Z\sim Z_{n,k}$. 
The jittering effect shows up right away: the cdf of $Z$ is given for any $t\geq 0$ by
\[
  F_{Z}(t) = P( N(\bX \cap C_{n,0})\leq  \lfloor t \rfloor -1) + P(N(\bX \cap C_{n,0})=\lfloor t\rfloor ) \,(t-\lfloor t \rfloor),
\]
and is continuously differentiable whereby we deduce that $Z$ admits a density $f_Z$ at $t$ given by
  $f_Z(t) = P(N(\bX \cap C_{n,0})=\lfloor t\rfloor )$.
We  define the jittered median-based estimator of $\lambda$ by
\begin{equation} \label{eq:median}
  \widehat \lambda_n^{\mathrm{med}} =\frac{\Mee (\bZ)}{c_n}
\end{equation}
where the sample median is defined by~\eqref{def:quantile}. To derive asymptotic properties for $\widehat\lambda_n^{\mathrm{med}}$, we need to consider the subclass of compactly supported DPPs, summarized by the following assumption. \medskip

\noindent $\HACbis$. The kernel $C$ of the stationary DPP satisfies $\HC$. There exists $R>0$ such that $C(x)=0$ for any $|x|>R$. \medskip

Finally a technical condition, ensuring the asymptotic positivity of the density at the median is required. \medskip

\noindent $\HAmed$. $\liminf_{n\to \infty} s_n>0$, where $s_n=\sqrt{c_n}\P(N(\bX \cap C_{n,0})=\lfloor \Me_Z\rfloor)$.

\begin{proposition} \label{prop:median}
Assume that the sequence of domains satisfies $\HW$, that the DPP with kernel $C$, $\bX$, satisfies $\HCbis$ and that $\Hmed$ holds. Then, as $n\to \infty$,\\
(a) \\
\begin{equation}\label{eq:resa}
  \sqrt{|W_n|}s_n \left( \frac{\Mee (\bZ)}{c_n} - \frac{\Me_Z}{c_n}\right) \to \mathcal N(0,1/4)
\end{equation}
in distribution.\\
(b)  $\Me_Z-\lambda c_n = o(\sqrt{c_n})$.\\
(c) If in addition, $\sqrt{k_n}(\Me_Z-\lambda c_n)/\sqrt{c_n} \to 0$ as $n\to \infty$, then
\begin{equation}\label{eq:resb}
  2\sqrt{|W_n|} s_n\left( \widehat \lambda_n^{\mathrm{med}} - \lambda \right) \to \mathcal N(0,1)
\end{equation}
in distribution.
\end{proposition}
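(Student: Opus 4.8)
The plan is to treat the three parts in order, with part~(a) --- a central limit theorem for the sample median of the jittered counts --- as the analytic core, from which (b) and (c) follow by softer arguments. Writing $N_{n,k}=N(\bX\cap C_{n,k})$, I would first, for~(a), linearise the sample median through the quantile/empirical-cdf duality: as $\Fe(\cdot;\bZ)$ is right-continuous, $\Mee(\bZ)\le t\iff\Fe(t;\bZ)\ge1/2$, so for fixed $u$ and $t_n=\Me_Z+u/\sqrt{k_n}$,
\begin{align*}
\P\big(\sqrt{k_n}(\Mee(\bZ)-\Me_Z)\le u\big)
&=\P\Big(\tfrac1{\sqrt{k_n}}\sum_{k\in\mathcal K_n}\big[\1(Z_{n,k}\le t_n)-F_Z(t_n)\big]\ge \sqrt{k_n}\big(\tfrac12-F_Z(t_n)\big)\Big).
\end{align*}
Since $Z$ has the explicit density $f_Z(t)=\P(N_{n,0}=\lfloor t\rfloor)$, a first-order expansion gives $\sqrt{k_n}(1/2-F_Z(t_n))=-(s_n/\sqrt{c_n})u+o(1)$; the only delicate point is when $\Me_Z$ is an integer, where $f_Z$ jumps, but Proposition~\ref{prop:ZW} forces the consecutive atoms $\P(N_{n,0}=m)$, $m\approx\lambda c_n$, to have ratio tending to $1$, so the one-sided slopes agree asymptotically and the expansion is valid. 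As the left-hand side of~\eqref{eq:resa} equals $\sqrt{k_n}f_Z(\Me_Z)(\Mee(\bZ)-\Me_Z)$, it suffices to prove $\tfrac1{\sqrt{k_n}}\sum_k[\1(Z_{n,k}\le t_n)-F_Z(t_n)]\to\mathcal N(0,1/4)$ and then read off $\sqrt{k_n}(\Mee(\bZ)-\Me_Z)\to\mathcal N(0,(4f_Z(\Me_Z)^2)^{-1})$.

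To prove this CLT I would exploit the compact support of $C$. By Proposition~\ref{prop:Rdependent} the field $(Z_{n,k})_{k\in\mathcal K_n}$ is $R$-dependent, and since the cells have diameter $\asymp c_n^{1/d}\to\infty$, for large $n$ two cells at lattice distance $\ge2$ are separated by more than $R$; the array is thus eventually $1$-dependent on $\Z^d$ with centred summands bounded by $1$. Its variance is $F_Z(t_n)(1-F_Z(t_n))+\tfrac1{k_n}\sum_{k\ne k'}\Cov(\1(Z_{n,k}\le t_n),\1(Z_{n,k'}\le t_n))$; the diagonal term tends to $1/4$, and only the $O(k_n)$ pairs of adjacent cells contribute to the second sum. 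For such a pair, with $\sigma_n^2=\Var(N_{n,0})=\lambda c_n-\int_{C_{n,0}}\!\int_{C_{n,0}}C(x-y)^2\,\dd x\,\dd y\sim(\lambda-\check C_0)c_n$, the count correlation is $\Cov(N_{n,k},N_{n,k'})/\sigma_n^2=-\int_{C_{n,k}}\!\int_{C_{n,k'}}C(x-y)^2\,\dd x\,\dd y/\sigma_n^2=O(c_n^{(d-1)/d}/c_n)=O(c_n^{-1/d})\to0$, the support $|x|\le R$ restricting the integral to a boundary layer. Combined with the joint asymptotic normality of DPP counts (Proposition~\ref{prop:std} and the Gaussian-limit result it relies on) and the independence of the jitters $U_k$ across cells, the two indicators become asymptotically independent, so their covariance vanishes and the variance converges to $1/4$. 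A central limit theorem for bounded $1$-dependent triangular arrays on $\Z^d$ (the Lindeberg condition being automatic) then yields the limit.

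For (b) I would use Proposition~\ref{prop:ZW}, which yields $|\P(N_{n,0}\le m)-\P(\Pi(\lambda c_n)\le m)|\le\kappa_0/\sqrt{c_n}\to0$ uniformly in $m$, together with the asymptotic normality of $\Pi(\lambda c_n)$. Fixing $\varepsilon>0$ and evaluating at $\lambda c_n\pm\varepsilon\sqrt{c_n}$ (the jitter $U\in[0,1]$ being negligible on this scale) gives $F_Z(\lambda c_n+\varepsilon\sqrt{c_n})\to\Phi(\varepsilon/\sqrt\lambda)>1/2$ and $F_Z(\lambda c_n-\varepsilon\sqrt{c_n})\to\Phi(-\varepsilon/\sqrt\lambda)<1/2$, so $\Me_Z$ is trapped in $[\lambda c_n-\varepsilon\sqrt{c_n},\lambda c_n+\varepsilon\sqrt{c_n}]$ for large $n$; as $\varepsilon$ is arbitrary, $\Me_Z-\lambda c_n=o(\sqrt{c_n})$.

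Finally, for (c) I would split
\begin{align*}
2\sqrt{|W_n|}\,s_n\big(\widehat\lambda_n^{\mathrm{med}}-\lambda\big)
=2\sqrt{|W_n|}\,s_n\,\frac{\Mee(\bZ)-\Me_Z}{c_n}
+2s_n\sqrt{k_n}\,\frac{\Me_Z-\lambda c_n}{\sqrt{c_n}}.
\end{align*}
The first term is twice the quantity in~\eqref{eq:resa} and converges to $\mathcal N(0,1)$ by (a); in the second, $s_n$ is bounded (above by the Poisson-approximation estimate on $\P(N_{n,0}=\lfloor\Me_Z\rfloor)$ coming from Proposition~\ref{prop:ZW}, below by $\Hmed$) and the added hypothesis forces $\sqrt{k_n}(\Me_Z-\lambda c_n)/\sqrt{c_n}\to0$, so the bias term vanishes and Slutsky's lemma yields~\eqref{eq:resb}. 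The main obstacle is the dependent CLT of part~(a): the quantitative proof that adjacent-cell indicator covariances vanish --- precisely where the DPP structure (compact support and Proposition~\ref{prop:ZW}) is indispensable --- together with the application of a triangular-array $m$-dependent central limit theorem whose summands vary with $n$.
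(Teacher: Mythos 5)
Your proposal takes a genuinely different route from the paper in parts (a) and (b), and part (b) as written contains a real error. Proposition~\ref{prop:ZW} bounds the \emph{point probabilities} $|\P(N(\bX\cap S)=m)-\P(\Pi(\lambda|S|)=m)|$, not the distribution functions, and the Kolmogorov-type bound $\sup_m|\P(N_{n,0}\le m)-\P(\Pi(\lambda c_n)\le m)|\le\kappa_0/\sqrt{c_n}$ that you deduce from it is not just unjustified (summing the pointwise bound over the $O(\lambda c_n)$ relevant atoms gives $O(\sqrt{c_n})$, which is useless) but actually \emph{false}: $\Var N(\bX\cap C_{n,0})=\lambda c_n-\int_{C_{n,0}}\int_{C_{n,0}}C(x-y)^2\,\dd x\,\dd y\sim(\lambda-\check C_0)c_n$, whereas $\Var\Pi(\lambda c_n)=\lambda c_n$, and $\check C_0>0$ for any continuous kernel with $C(0)=\lambda>0$. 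After centering at $\lambda c_n$ and scaling by $\sqrt{c_n}$, the two laws converge to Gaussians with different variances, so their Kolmogorov distance tends to a positive constant, not to zero; accordingly your limits $\Phi(\pm\varepsilon/\sqrt{\lambda})$ are also wrong (they should be $\Phi(\pm\varepsilon/\sqrt{\lambda-\check C_0})$). The trapping strategy itself is sound and repairable: replace the Poisson comparison by a CLT for the count in a single growing cell (obtained from Brillinger mixing, as the paper does by following \citet[Theorem~4.2, Step~1]{coeurjolly:15}, or from Proposition~\ref{prop:std} applied to the cell sequence); both corrected limits still straddle $1/2$, so $\Me_Z-\lambda c_n=o(\sqrt{c_n})$ follows. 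The paper's own route is slightly different (first $\Me_Z-\lambda c_n=O(\sqrt{c_n})$ via \citet[Proposition~3.1]{coeurjolly:15}, then an upgrade to $o(\sqrt{c_n})$ by a mean-value argument combined with a Stirling-based lower bound on $f_Z$ near the median), but your fixed version would be an acceptable, arguably more direct, alternative.

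For (a), be aware that the paper does not reprove the median CLT at all: it invokes \citet[Theorem~4.3]{coeurjolly:15} and merely verifies its hypotheses (i)--(v), using Proposition~\ref{prop:Rdependent} ($R$-dependence kills the $\alpha$-mixing condition), \eqref{eq:ZW2} together with Stirling's formula \eqref{eq:stirling} (flatness of $f_Z$ on a $\sqrt{c_n/k_n}$-neighbourhood of $\lambda c_n$), $\Hmed$ (positivity of $s_n$), and Brillinger mixing (integrability of $g-1$). Your self-contained Bahadur-type derivation is essentially what lies inside that cited theorem, which is legitimate, but two steps need care: first, the perturbation $t_n=\Me_Z+u/\sqrt{k_n}$ is on the wrong scale --- since the median fluctuates at scale $\sqrt{c_n/k_n}$, you must take $t_n=\Me_Z+u\sqrt{c_n}/(s_n\sqrt{k_n})$, otherwise $\sqrt{k_n}(1/2-F_Z(t_n))\to0$ and all your limiting probabilities degenerate to $1/2$; second, the vanishing of the adjacent-cell indicator covariances is asserted from ``joint asymptotic normality of DPP counts,'' which itself requires a proof (e.g.\ a Cram\'er--Wold argument applying Soshnikov's CLT to linear combinations of the two adjacent counts), and your side remark that Proposition~\ref{prop:ZW} forces consecutive atoms to have ratio tending to $1$ does not follow as stated, because the approximation error $\kappa_0/\sqrt{c_n}$ is of the same order as the atoms themselves. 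Part (c) coincides with the paper's argument (bias/stochastic decomposition plus Slutsky), and your boundedness argument for $s_n$ there is a correct use of Proposition~\ref{prop:ZW}.
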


It is worth comparing \eqref{eq:resb} with \citet[Corollary~6]{coeurjolly:15} devoted to Cox processes. For Cox processes, the author is able to provide conditions on the Cox process for which $s_n$ admits a limit equal to $(2\pi \sigma^2)^{-1/2}$, where $\sigma^2=\lambda+\int_{\R^d}(g(x)-1)\dd x$. To prove this, \citet{coeurjolly:15} uses explicitly the connection between Poisson and Cox point processes. We were unable to prove any result of that type for stationary DPP (and actually conjecture that there  is no limit). An interesting fact can however be noticed. Since DPP are purely repulsive models, the asymptotic variance of $\widehat \lambda^\mathrm{std}$, that is $\sigma^2$ is always bounded by $\lambda$, that is, by the corresponding asymptotic variance under the Poisson model. Transferring this to the median-based estimator, we conjecture that $|W_n|\Var(\widehat \lambda^\mathrm{med})$ is asymptotically bounded by the corresponding variance under the Poisson case, which is precisely $\pi \lambda/2$. By replacing $\lambda$ by its estimate, we have the basis to propose an asymptotic conservative confidence interval for $\lambda$.

We end this section by stating a sufficient condition which ensures $\Hmed$.

\begin{proposition}
If the DPP with kernel $C$, $\bX$, satisfies $\HC$, then $\Hmed$ holds if
\begin{equation}
  \label{eq:conditionAmed}
  \max \left( 
(2\pi\lambda)^{-1/2} - \kappa_0 , (2\pi\lambda)^{-1/2} \left( 1+ \frac{\check C_0}{2\lambda}\right)-\kappa_1
  \right) >0
\end{equation}
where $\kappa_0$ and $\kappa_1$ are given by~\eqref{eq:k0k1} and  $\check C_0=\int_{\R^d} C^2(x)\dd x$.
\end{proposition}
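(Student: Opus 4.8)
The plan is to read off the two arguments of the maximum from the two representations of $\P(N(\bX\cap C_{n,0})=m)$ supplied by Proposition~\ref{prop:ZW}, applied to the single cell $S=C_{n,0}$ with $|S|=c_n$. Writing $m_n=\lfloor \Me_Z\rfloor$, the object to control is $s_n=\sqrt{c_n}\,\P(N(\bX\cap C_{n,0})=m_n)$, and it suffices to produce a lower bound for $\liminf_{n\to\infty}s_n$ equal to each of the two terms inside the maximum; keeping the larger one then yields $\Hmed$. The first step is to locate the median. Applying the central limit theorem of Proposition~\ref{prop:std} to the single growing cell $C_{n,0}$ (legitimate since $c_n\to\infty$) gives $(N(\bX\cap C_{n,0})-\lambda c_n)/\sqrt{c_n}\to\mathcal N(0,\sigma^2)$ with $\sigma^2=\lambda-\check C_0>0$. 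The jittered variable satisfies $(Z-\lambda c_n)/\sqrt{c_n}\to\mathcal N(0,\sigma^2)$ as well, since $U_k/\sqrt{c_n}\to 0$; as the limit law is continuous and strictly increasing, its $1/2$-quantile converges to $0$, so that $\Me_Z=\lambda c_n+o(\sqrt{c_n})$ and hence $m_n-\lambda c_n=o(\sqrt{c_n})$.

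For the first term I would use the $d_0$ representation. Since $\P(N(\bX\cap C_{n,0})=m_n)=\P(\Pi(\lambda c_n)=m_n)+d_0(C_{n,0},m_n)$, one has
\begin{equation*}
  s_n=\sqrt{c_n}\,\P(\Pi(\lambda c_n)=m_n)+\sqrt{c_n}\,d_0(C_{n,0},m_n),
\end{equation*}
and the bound $|d_0(C_{n,0},m_n)|\le \kappa_0/\sqrt{c_n}$ controls the second summand by $\kappa_0$. For the Poisson point mass a Stirling expansion gives $\sqrt{c_n}\,\P(\Pi(\ell)=m)=(2\pi m/c_n)^{-1/2}\exp(-(m-\ell)^2/(2\ell)+o(1))$ with $\ell=\lambda c_n$; since $m_n-\lambda c_n=o(\sqrt{c_n})$ the exponent tends to $0$ and the prefactor to $(2\pi\lambda)^{-1/2}$, whence $\sqrt{c_n}\,\P(\Pi(\lambda c_n)=m_n)\to(2\pi\lambda)^{-1/2}$. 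Combining the two pieces yields $\liminf_n s_n\ge (2\pi\lambda)^{-1/2}-\kappa_0$.

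For the second term I would repeat the argument with the sharper $d_1$ representation, which replaces the Poisson mass by $\P(\Pi(\lambda c_n)=m_n)\,(1-c_n\omega(m_n,\lambda c_n)\check C_0/2)$ up to an error bounded by $\kappa_1/\sqrt{c_n}+\kappa_1^\prime/c_n$, so that $|\sqrt{c_n}\,d_1(C_{n,0},m_n)|\le \kappa_1+\kappa_1^\prime/\sqrt{c_n}\to\kappa_1$. Here the key computation is the asymptotics of the correction factor: from $\omega(m,\ell)=((m-\ell)^2-m)/\ell^2$ with $\ell=\lambda c_n$, $(m_n-\lambda c_n)^2=o(c_n)$ and $m_n\sim\lambda c_n$, one gets $c_n\,\omega(m_n,\lambda c_n)\to -1/\lambda$, hence $1-c_n\omega(m_n,\lambda c_n)\check C_0/2\to 1+\check C_0/(2\lambda)$. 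Together with the Poisson asymptotics above this gives $\liminf_n s_n\ge (2\pi\lambda)^{-1/2}\big(1+\check C_0/(2\lambda)\big)-\kappa_1$. Taking the larger of the two lower bounds establishes the claim.

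The main obstacle is twofold and is concentrated in the first paragraph: first, making the median-location step $m_n-\lambda c_n=o(\sqrt{c_n})$ rigorous, for which I would invoke that convergence in distribution to a continuous strictly increasing limit forces convergence of the $1/2$-quantiles; and second, the local central limit asymptotics for $\sqrt{c_n}\,\P(\Pi(\lambda c_n)=m_n)$ at a point $m_n$ only known to lie within $o(\sqrt{c_n})$ of the mean—this $o(\sqrt{c_n})$ window is exactly what is needed to annihilate the Gaussian factor $\exp(-(m_n-\lambda c_n)^2/(2\lambda c_n))$ and recover the clean constant $(2\pi\lambda)^{-1/2}$. I note that a cruder comparison of the full cumulative distribution functions via the $d_0$ bound would only locate the median to precision $O(\sqrt{c_n})$, which is why the weak-convergence argument of Proposition~\ref{prop:std} is used instead. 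Once these two asymptotic facts are in hand, the $d_0$ and $d_1$ bounds of Proposition~\ref{prop:ZW} plug in mechanically.
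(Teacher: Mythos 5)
Your proof is correct, and its core is the same as the paper's: lower-bound $s_n$ through the two decompositions $d_0$ and $d_1$ of Proposition~\ref{prop:ZW} applied to $S=C_{n,0}$, control the error terms by $\kappa_0$ and $\kappa_1$, and obtain the limits $\sqrt{c_n}\,\P(\Pi(\lambda c_n)=\lfloor\Me_Z\rfloor)\to(2\pi\lambda)^{-1/2}$ and $c_n\,\omega(\lfloor\Me_Z\rfloor,\lambda c_n)\to-1/\lambda$ via Stirling once $\Me_Z-\lambda c_n=o(\sqrt{c_n})$ is known. (Your $-1/\lambda$ is the correct limit; the $-1$ in the paper's proof is a typo, since only $-1/\lambda$ produces the factor $1+\check C_0/(2\lambda)$ appearing in the statement, and the paper's display also drops the factor $\sqrt{c_n}$ in the Poisson limit.) The genuine difference is how the median is located. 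The paper gets $\lfloor\Me_Z\rfloor=\lambda c_n+o(\sqrt{c_n})$ by citing Proposition~\ref{prop:median}, whose hypotheses include $\Hmed$ itself (as well as $\HCbis$ and $\HW$); taken literally this makes the paper's argument for the sufficient condition circular, and it also imports the compact-support assumption that the present statement does not make. Your route --- a CLT for $N(\bX\cap C_{n,0})$, Slutsky to absorb the jitter, then convergence of quantiles under weak convergence to a limit law with continuous, strictly increasing cdf --- uses neither $\Hmed$ nor compact support, so it is self-contained and consistent with the stated hypothesis $\HC$; that is a substantive improvement, not a stylistic variant. The one point to tighten: Proposition~\ref{prop:std} is stated for regular sequences (convex, nested, inball radius tending to infinity), and $\HW$ does not literally guarantee that the cells $C_{n,0}$, recentred, form such a sequence. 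Either add this mild assumption on the cells, or invoke Soshnikov's CLT directly: under $\HC$ one has $\Var N(\bX\cap C_{n,0})\geq c_n(\lambda-\check C_0)\to\infty$ with $\lambda-\check C_0>0$ (by Parseval and $\F(C)<1$, exactly as in the proof of Proposition~\ref{prop:ZW}), which is all that CLT requires, and is essentially how the paper itself justifies the analogous step in the proof of Proposition~\ref{prop:median}(b).
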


Condition~\eqref{eq:conditionAmed} is a theoretical condition, which allows us to understand what kind of kernels $C$ can satisfy $\Hmed$. From a practical point of view, if we assume that the data can be modelled by a stationary DPP, the condition~\eqref{eq:conditionAmed} can be tested by plugging an estimate of $\lambda$ and $\check C_0$. Note that $\check C_0= \lambda-\sigma^2$ can be  estimated by $\widehat \lambda_n^\mathrm{med}-\widehat \sigma_n^2$ where $\widehat \sigma_n^2$ is the estimate detailed in Section~\ref{sec:std} into which we can also replace $\widehat \lambda_n^\mathrm{std}$ by $\widehat \lambda_n^\mathrm{med}$ if the presence of outliers is suspected.

\begin{proof} 
Using Proposition~\ref{prop:ZW} (and the notation therein), we have
\[
  s_n = \sqrt{c_n} \P(N(\bX \cap C_{n,0})=\lfloor \Me_Z\rfloor) \geq \max \left(L_{0,n} , L_{1,n}\right)
\]
where 
\begin{align*}
L_{0,n}=& \sqrt{c_n}\P( \Pi(\lambda c_n)=\lfloor \Me_Z \rfloor) -\kappa_0 \\
L_{1,n}=& \sqrt{c_n}\P(\Pi(\lambda c_n)= \lfloor \Me_Z \rfloor) \left( 1-c_n\omega(\lfloor\Me_Z\rfloor,\lambda c_n)\check C_0/2\right) - \kappa_1.
\end{align*}
As $n\to \infty$, Proposition~\ref{prop:median} (a) and~\eqref{eq:stirling} yield that 
\[
  \P( \Pi(\lambda c_n)=\lfloor \Me_Z \rfloor) \to (2\pi\lambda)^{-1/2} 
  \quad \mbox{ and }\quad
  c_n \omega(\lfloor\Me_Z\rfloor,\lambda c_n) \to -1
\]
whereby we deduce the result.  
\end{proof} 


\section{Simulation study} \label{sec:sim}

In this section, we investigate the performances of~\eqref{eq:std} and~\eqref{eq:median} for planar stationary DPPs. The study follows the one done in~\cite{coeurjolly:15} which was mainly designed for Cox processes.

We consider the following DPP model introduced by~\cite{biscio:lavancier:15}. For $\nu>0$, let $j_\nu$ be the first positive zeros of the Bessel function of the first kind $J_\nu$ and define the constant $M$ by
$M \lambda^{1/d}= \left( 2^{d-2} j^2_{\frac{d-2}{2}} \Gamma\left(\frac{d}{2} \right)\right)^{1/d} /\pi^{1/2} $.
When $d=2$, $M \lambda^{1/2} = j_0/\pi^{1/2} \approx 1.357$. Now, let $R \in (0,M]$. We define the kernel $C_R=u_R \ast u_R$ where
\begin{align*}
 u_R(x)=\kappa \ \frac{J_{\frac{d-2}{2}}\left(2j_{\frac{d-2}{2}}\frac{|x|}{R}\right)}{|x|^\frac{d-2}{2}}\ \1_{\left\lbrace |x| <\frac{R}{2} \right\rbrace},
\end{align*}
and $\kappa^2=\frac{4 \Gamma\left(d/2\right)}{ \lambda\pi^{d/2} R^2  }\left(J'_{\frac{d-2}{2}}(j_{\frac{d-2}{2}})\right)^{-2}$. Note that there exist many other kernels of DPPs which are compactly  supported, see for instance~\citet[Proposition 4.1]{biscio:lavancier:15}. The advantage of the kernel $C_R$ is that its Fourier transform is explicit and thus $\HCbis$ can be investigated.
In particular, for any, $x\in \R^d$, it can be shown that, $\F(C_R)(x) \leq \frac{R^d}{M^d}$. Thus, the kernel $C_R$ satisfies $\HCbis$ for all $R<M$. Two different versions of this model, denoted {\sc dpp1} and {\sc dpp2}, obtained by setting $R$ to the values $R=M/4$ and $R=3M/4$ respectively, are considered in the simulation study. Figure~\ref{fig:ex} depicts the pair correlation functions $g$ for these models as well as a realization of each of these processes. It is to be noted that the models {\sc dpp1} and {\sc dpp2} satisfy $\HCbis$ and $\Hmed$. Specifically, the constants involved in~\eqref{eq:conditionAmed} are numerically evaluated to 0.057 and 0.021 respectively.\\

\begin{figure}[htbp]
 \begin{multicols}{3} 
 \hspace*{-.5cm}\includegraphics[scale=.35]{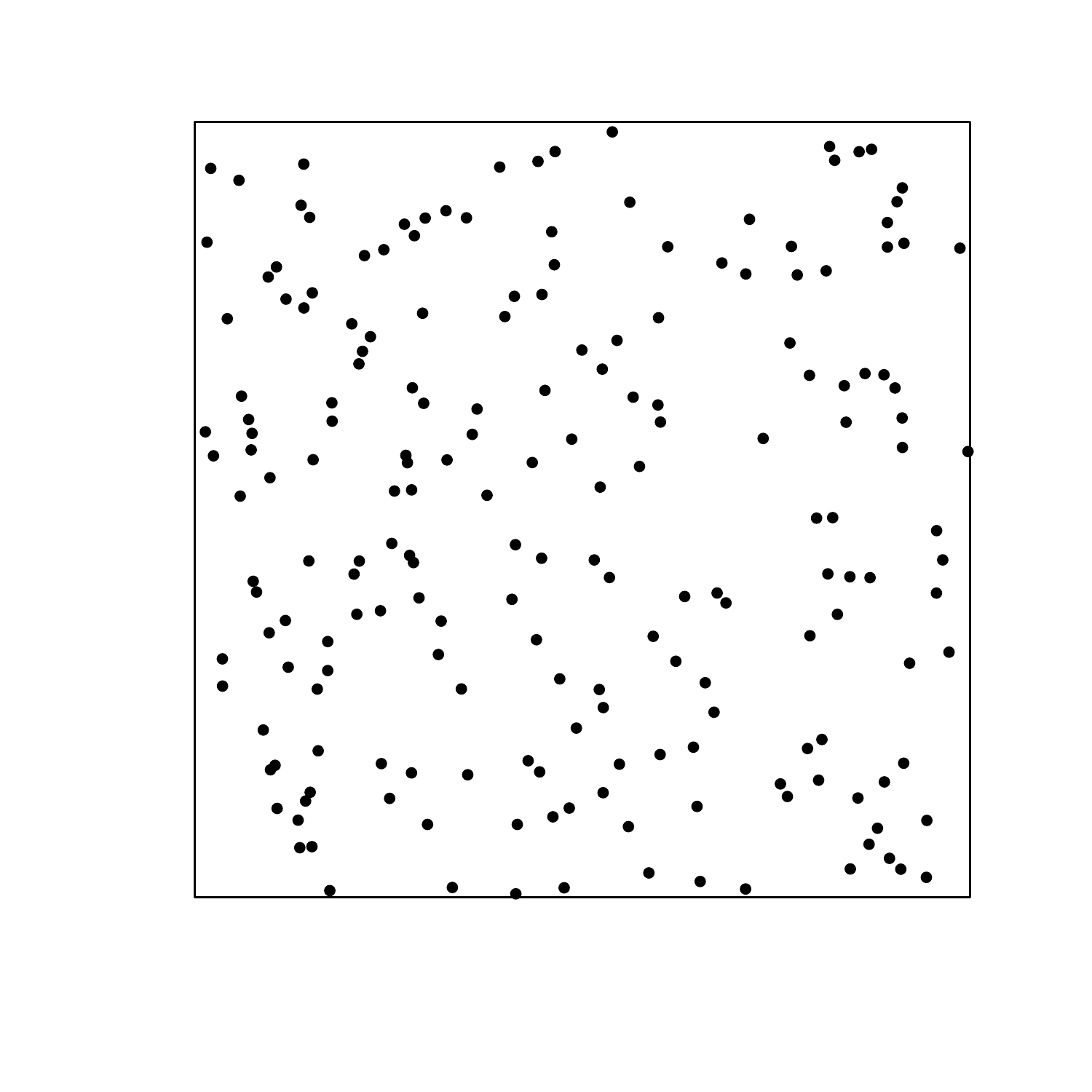}
 \hspace*{-.5cm}\includegraphics[scale=.35]{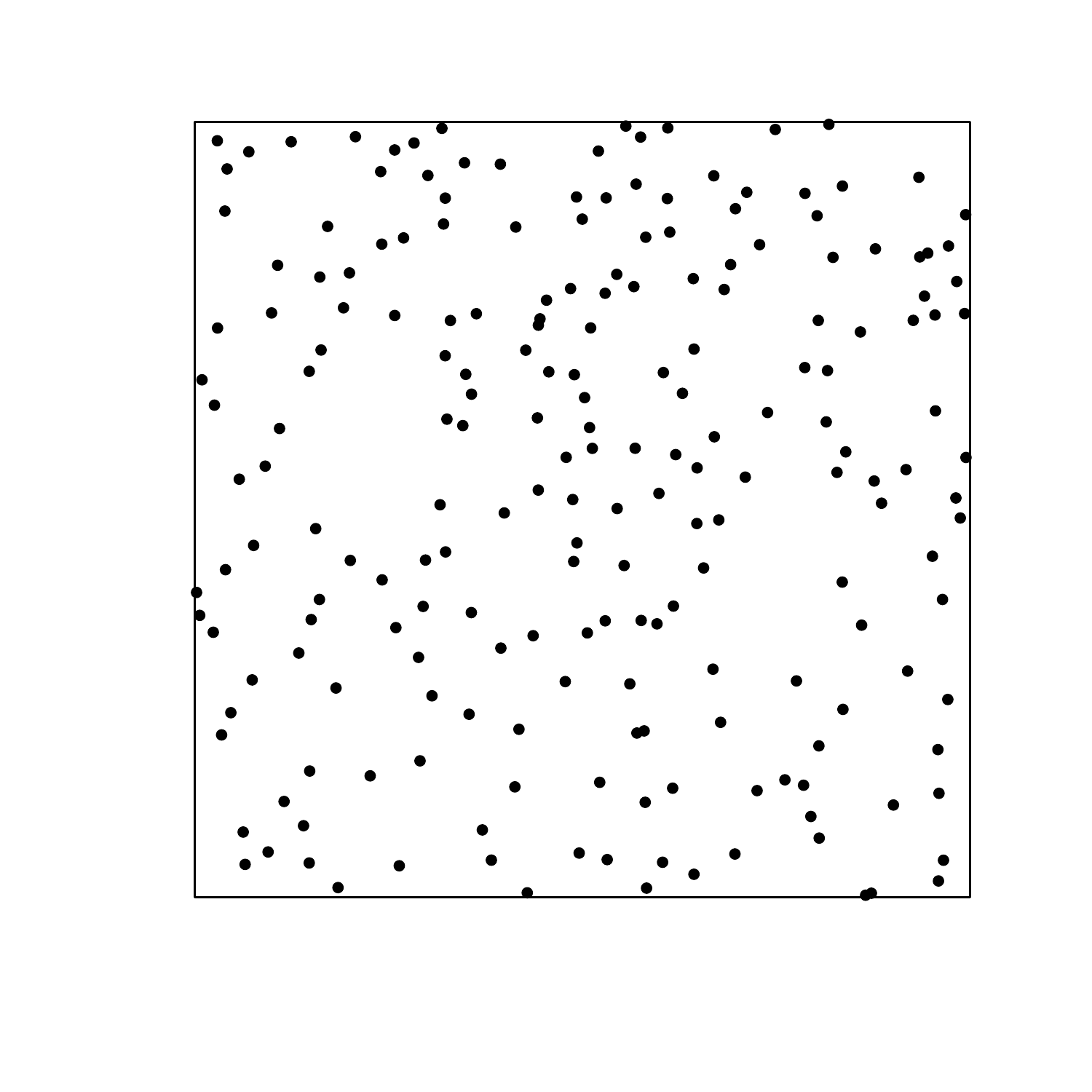}
 \hspace*{0cm}\includegraphics[scale=.33]{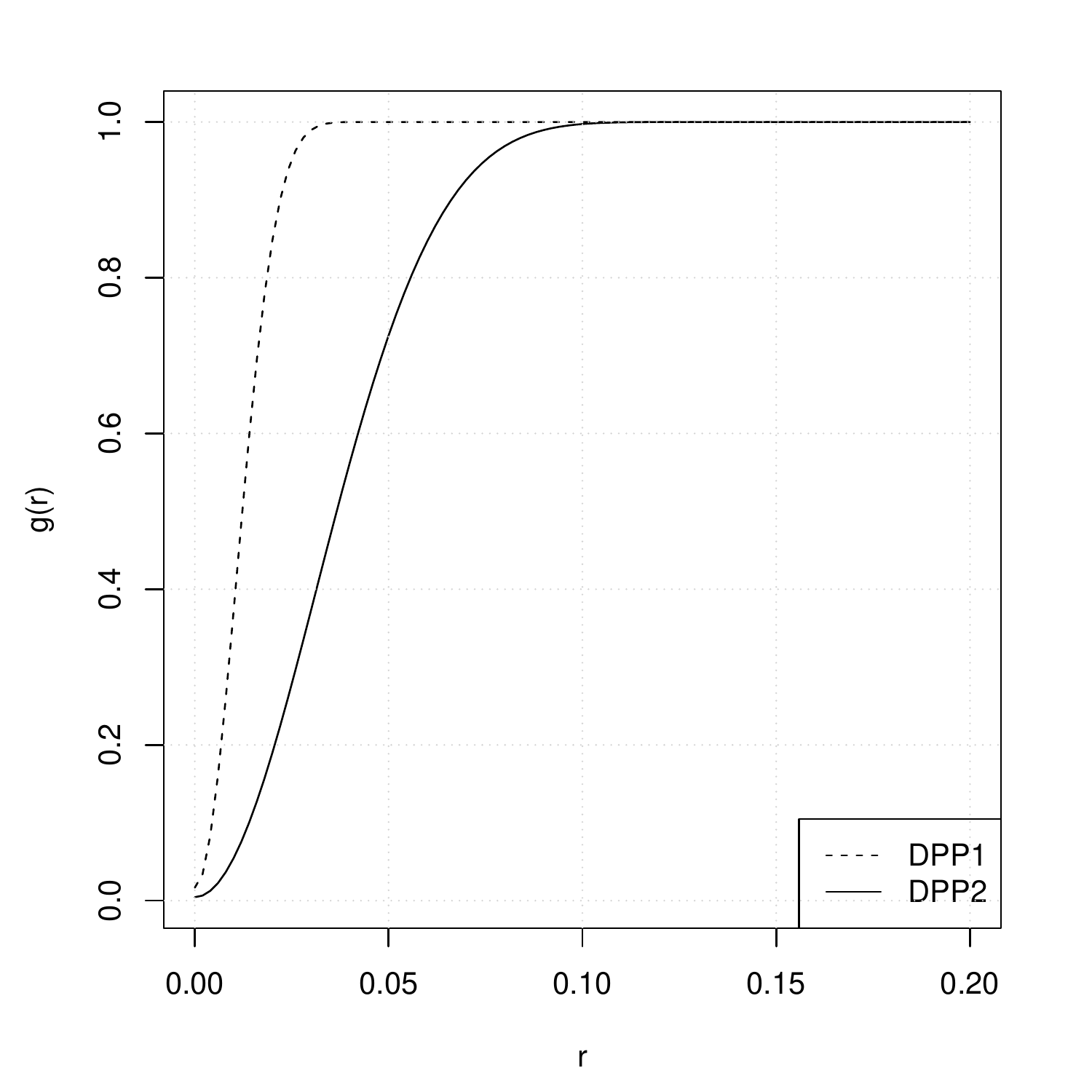}
 \end{multicols}
\caption{\label{fig:ex} Left (resp. middle): Realization of the model {\sc dpp1} (resp. {\sc dpp2}) on the domain $[-1,1]^2$. The intensity equals $\lambda=50$; Right: Pair correlation function $g$ for the models {\sc dpp1,dpp2}. }
\end{figure}

The models {\sc dpp1, dpp2} are generated on  $W_n=[-n,n]^2$ for $n=1,2$ and we  consider the three following settings: let $\by$ be a realization from one of the two models described above, generated on $W_n$ and with $m$ points. The observed point pattern is denoted by $\bx$ and is obtained as follows.
\begin{itemize}
  \item[(A)] Pure case: no modification is considered, $\bx=\by$.
  \item[(B)] A few points are added: in a sub-square $\Delta_n$ with side-length $|\Delta_n|^{1/2}=n/5$ included in $W_n$ and randomly chosen, we  generate a point process $\by^{\mathrm{add}}$ of $n^{\mathrm{add}}= \rho \, m$ uniform points in $\Delta_n$. We choose $\rho=0.05$ or $0.1$. Then, we define $\bx=\by \cup \by^{\mathrm{add}}$.
  \item[(C)] A few points are deleted: let $\Delta_n$ be a randomly chosen sub-square included in $W_n$. The volume of $\Delta_n$ is chosen such that $\E(N({\bX \cap \Delta_n})) = \rho \, \E(N(\bX \cap W_n)) = \rho \lambda |W_n|$, with $\bX \sim  \textsc{ dpp1}$ or {\sc dpp2}, and we choose either $\rho=0.05$ or $0.1$. Then, we define $\bx= \by\setminus {\Delta_n}$, i.e. $\bx$ is the initial configuration thinned by 5\% or 10\% of its points (on average) located in the sub-square $\Delta_n$.
\end{itemize}
An illustration of settings (B) and (C) is proposed in Figure~\ref{exContamination}.
We conduct a Monte Carlo simulation and generate $500$ replications of the models {\sc dpp1, dpp2} with intensity parameter $\lambda=50$ and for the three different settings (A)-(C). For each replication, we evaluate $\widehat\lambda_n^{\mathrm{std}}$ and $\widehat\lambda_n^{\mathrm{med}}$ for different number of non-overlapping and equally sized cells $k_n$. More precisely, we choose $k_n=9,16,25,36,49$. The empirical results can be sometimes quite influenced by the choice of the number of blocks $k_n$. In a separate analysis not reported, we have noticed that, depending on the situation, the estimates could be far from $\lambda$ for some $k_n$ but, also, that there are  consecutive values of $k_n$ producing close values. Following this empirical finding, we propose the data-driven estimator for $\lambda$, denoted by $\widetilde \lambda_n^\mathrm{med}$ and  defined as the median of the median-based estimators, that is 
\begin{equation}
  \label{eq:medianDD}
  \widetilde \lambda_n^{\mathrm{med}} = \Mee \left( \left\{ \widehat \lambda_n^{\mathrm{med}}, \; k_n=9,16,25,36,49\right\} \right).
\end{equation}
The estimator $\widetilde \lambda_n^{\mathrm{med}}$ is very simple  and quick to evaluate. It is a reasonable procedure as it follows standard ideas from aggregated estimators (see e.g. \cite{lavancier:rochet:16}). Let us add that it requires only to tune a grid of reasonable values for the number of block cells. To set this grid, we suggest to start with a small number of blocks, 9 or 16, and increase it until the estimate $\widehat \lambda_n^\mathrm{med}$ significantly deviates from the other ones.

Tables~\ref{tab:nothing}-\ref{tab:delete} summarize the results. We report empirical results for $\widehat\lambda_n^\mathrm{std}$, for the median-based estimator $\widehat\lambda_n^\mathrm{med}$ for $k_n=9, 25$ and $49$ and for the data-driven estimator $\widetilde\lambda_n^\mathrm{med}$.
Table~\ref{tab:nothing} reports empirical means and standard deviations for the pure case (A). Tables~\ref{tab:add} and~\ref{tab:delete} are respectively related to the settings (B) and (C). The two latter can affect significantly the bias of the estimator. In both tables, we report the bias of the different estimators and the gain (in percent) in terms of mean squared error of $\widehat\lambda=\widehat\lambda_n^{\mathrm{med}}$ or  $\widehat\lambda=\widetilde\lambda_n^{\mathrm{med}}$ with respect to $\widehat \lambda_n^\mathrm{std}$, i.e. for each model and each value of $\rho, n, k_n$, we  compute
\begin{equation}
    \label{eq:gain}
    \widehat{\mathrm{Gain}} ( \widehat\lambda )=  \left( \frac{\widehat{\mathrm{MSE}}(\widehat\lambda_n^{\mathrm{std}}) - 
  \widehat{\mathrm{MSE}}( \widehat\lambda )}{\widehat{\mathrm{MSE}}(\widehat\lambda_n^{\mathrm{std}})} \right) \times 100\%
  \end{equation}  
 where $\widehat{\mathrm{MSE}}$ is the empirical mean squared error based on the 500 replications. Thus a positive (resp. negative) empirical gain means that the median-based estimator is more efficient (resp. less efficient) than the standard procedure.

Table~\ref{tab:nothing} shows that the standard and the median-based estimators are consistent when $n$ increases. The parameter $k_n$ looks crucial when $n=1$. In particular the bias seems to increase with $k_n$. When $n=2$, its  influence is much less important. It is also interesting to note that the choice of $k_n$ does not change that much the empirical standard deviations. With absence of outliers, the standard estimator obviously outperforms the median-based estimators, but it is interesting to note that the loss of efficiency is not too important as $n$ increases. Our data-driven median-based estimator, surprisingly, exhibits very nice properties. The bias seems to be averaged over the $k_n$ when $n=1$ and the procedure is even able to reduce the standard deviation. Following the remark after Proposition~\ref{prop:median} the standard deviation of the median-based estimator is difficult to estimate but it can be bounded by the corresponding standard deviation under the Poisson case. According to the simulation setting, this upper-bound is equal to $\sqrt{\pi \lambda /2}/(2n)$, which is equal to $4.4$ when $n=1$ and $2.2$ when $n=2$. This indeed bounds the empirical standard deviation of $\widetilde \lambda_n^\mathrm{med}$ and $\widehat \lambda_n^\mathrm{med}$ for any $k_n$.

Regarding Tables~\ref{tab:add} and~\ref{tab:delete}, we can observe that $\widehat\lambda_n^{\mathrm{std}}$ gets biased. 
As expected, this bias is less important for $\widehat \lambda_n^{\mathrm{med}}$. 
When $n=1$ and $\rho=0.05$, the standard estimator remains much better than the median-based estimator: the gain is negative and can reach very low values. This behaviour also holds in the case (B) when $\rho=0.1$. In the other situations, the median-based estimator outperforms the standard procedure with a positive gain for almost all the values of $k_n$. The fluctuation of the gain with $k_n$ is not very satisfactory and justifies again the introduction of a data-driven procedure. In the setting (B), like $\widehat\lambda_n^\mathrm{med}$, $\widetilde \lambda_n^\mathrm{med}$ outperforms the standard estimator when $n=2$ and behaves similarly to the standard estimator when $n=1$ and $\rho=0.1$. Like in the setting (A), the standard deviation of $\widetilde\lambda_n^\mathrm{med}$ is shown to be smaller than the ones of $\widehat\lambda_n^\mathrm{med}$ for all values of $k_n$, which explains why we observe a higher gain. The conclusion for the setting (C) is unambiguous: the performances of $\widetilde\lambda_n^\mathrm{med}$ are very good even for small observation window or when only 5\% of points on average are deleted. Again, the gain of $\widetilde\lambda_n^\mathrm{med}$ is larger than the gains obtained from $\widehat\lambda_n^\mathrm{med}$ for all the values of $k_n$, except for the {\sc dpp2} model when  $n=2$ an $k_n=49$ for which the observed empirical gain is slightly larger. As a general comment for Tables~\ref{tab:add} and~\ref{tab:delete}, we observe the more repulsive the point pattern, the higher the performances of the robust estimates.\\

\begin{table}[H]
\centering
{\small\begin{tabular}{rrrrrr}
  \hline
&  \multicolumn{5}{c}{Empirical mean (Standard Deviation)} \\
& \multicolumn{1}{c}{$\widehat\lambda_n^{\mathrm{std}}$} & \multicolumn{3}{c}{$\widehat \lambda_n^\mathrm{med}$} & $\widetilde \lambda_n^\mathrm{med}$\\
 &  & $k_n=9$  & $25$    & $49$ &\\
  \hline
\multicolumn{3}{l}{ {\sc dpp1} } &&\\
$n=1$ &  49.7 (3.5)&  50.6   (4.3)&   52.1    (4.3)&   54.1   (4.6)& 52.1 (3.9) \\
  $n=2$ &49.5 (1.6)&  49.8 (2.1)&   50.1  (2.1)&   50.7  (2.1)& 50.1  (1.8)\\
&&&&&\\
\multicolumn{3}{l}{ {\sc dpp2}  }&&\\
$n=1$&  50.0 (3.0) & 51.1 (3.7) &  52.8  (3.9)&   55.3 (3.7)& 52.9 (3.4)\\
$n=2$ & 50.0 (1.5) & 50.3 (1.8)  & 50.6  (1.9)&   51.2 (1.9)& 50.6 (1.6)\\
\hline
\end{tabular}
\caption{\label{tab:nothing} Empirical means and standard deviations between brackets of estimates of the intensity $\lambda=50$ for different models of determinantal point processes ({\sc dpp1, dpp2}). The empirical results are based on 500 replications simulated on $[-n,n]^2$ for $n=1,2$. The first column corresponds to the standard estimator given by~\eqref{eq:std}
while the following ones correspond to the median-based estimators given by~\eqref{eq:median} for different number of cells $k_n$ and by~\eqref{eq:medianDD} for the data-driven procedure.}
}
\end{table}

\begin{table}[H]
\centering
\begin{tabular}{rrrrrr} 
  \hline
&  \multicolumn{5}{c}{Bias (Gain of MSE \%)} \\
& \multicolumn{1}{c}{$\widehat\lambda_n^{\mathrm{std}}$} & \multicolumn{3}{c}{$\widehat \lambda_n^\mathrm{med}$} & $\widetilde \lambda_n^\mathrm{med}$\\
 &  & $k_n=9$  & $25$    & $49$ &\\
  \hline
\multicolumn{1}{l}{$\rho=0.05$}  &&&&&\\\
{\sc dpp1}, $n=1$  &1.9 (0)&   1.7 (-48)  &  3.0 (-73)&    5.2 (-203) &3.2 (-62)\\
  $n=2$ &1.9 (0)&   0.4 (18)&    0.8 (12)&    1.4 (-1)& 0.8  (36)\\
  {\sc dpp2}, $n=1$ &2.3 (0) &  2.0   (-39)&    3.5  (-86)&    5.9  (-253)& 3.6 (-72)\\
  $n=2$ &2.3 (0)&   0.8 (36)&    1.1 (30)&    1.8 (10)& 1.2  (42)\\
 \hline
  \multicolumn{1}{l}{$\rho=0.1$}  &&&&&\\\
   {\sc dpp1}, $n=1$ &4.7 (0)& 2.8 (13)& 4.0 (-8)& 6.0 (-61)& 4.2 (1)\\
  $n=2$ &4.8 (0)&   1.1 (71)&    1.3 (70)& 1.9 (66)& 1.4  (77) \\
  {\sc dpp2}, $n=1$ & 5.0   (0)&   3.2 (19)&    4.4 (-4)&  6.8 (-87)& 4.7 (1)\\
  $n=2$ & 4.8 (0)&   1.1 (76)& 1.3 (77)& 2.0 (69)& 1.4  (80)\\
   \hline
\end{tabular}
\caption{\label{tab:add} Bias and empirical gains in percent between brackets, see~\eqref{eq:gain}, for the standard and  median based estimators for different values of $k_n$. The empirical results are based on 500 replications generated on $[-n,n]^2$ for $n=1,2$ for the models {\sc dpp1, dpp2} where $5\%$ or $10\%$ of points are added to each configuration. This corresponds to the case (B) described in details above.}
\end{table}

\begin{table}[H]
\centering
\begin{tabular}{rrrrrrr}
  \hline
&  \multicolumn{5}{c}{Bias (Gain of MSE \%)} \\
& \multicolumn{1}{c}{$\widehat\lambda_n^{\mathrm{std}}$} & \multicolumn{3}{c}{$\widehat \lambda_n^\mathrm{med}$} & $\widetilde \lambda_n^\mathrm{med}$\\
 &  & $k_n=9$  & $25$    & $49$ &\\
  \hline
\multicolumn{1}{l}{$\rho=0.05$}  &&&&&&\\\
{\sc dpp1}, $n=1$  &
-2.8   (0)&  -2.0 (0)&   -0.7 (3)&    1.6    (-10)& -0.6  (30)\\    
  $n=2$ & 
 -2.9 (0)&  -2.8 (-9)&   -1.8 (25)&   -0.6 (52)& -1.7  (40)\\
  {\sc dpp2}, $n=1$ & 
-2.6 (0) & -1.5 (-9)&    0.2  (4)&    2.7    (-51)&  0.2  (28)\\
  $n=2$ & 
-2.6   (0)&  -2.3    (-4)&   -1.1     (41)&   -0.1     (56)& -1.1  (53)\\
  \hline
  \multicolumn{1}{l}{$\rho=0.1$}  &&&&&&\\\
   {\sc dpp1}, $n=1$ &
-5.4   (0)&  -4.7    (-3)&   -2.3     (35)&    0.0     (46)& -2.3  (48)\\
  $n=2$ & 
-5.4   (0) & -5.0    (-1) &  -1.9 (69)&   -1.7  (74)& -2.3  (69)\\
  {\sc dpp2}, $n=1$ &  
-4.9   (0)&  -3.9 (5)&   -0.8 (41)&   2.0 (36)& -0.9  (56)\\
  $n=2$ &
-5.2   (0) & -4.4 (12)&   -1.3 (79)&   -1.1     (83)&-1.6  (80)\\
   \hline
\end{tabular}
\caption{\label{tab:delete} Bias and empirical gains in percent between brackets, see~\eqref{eq:gain}, for the standard and median based estimators for different values of $k_n$. The empirical results are based on 500 replications generated on $[-n,n]^2$ for $n=1,2$ for the models {\sc dpp1, dpp2} where $5\%$ or $10\%$ of points are deleted to each configuration. This corresponds to the case (C) described in details above.}
\end{table}

The differences of performances of the median-based estimators between the settings (B) or (C) were not expected. To investigate this more, we extend the simulation study. For the case (B), we investigate a different number of randomly chosen sub-squares (specifically 1,2 and 4 sub-squares) and with different side-length (specifically $n/10, n/5$ and $2n/5$) into which points are added. For the setting (C), we also investigate the possibility to delete on average $\rho=5\%$ of the initial points in 1, 2 or 4 randomly chosen sub-squares. We consider only the {\sc dpp2} model and the estimator $\widetilde \lambda_n^\mathrm{med}$. Table~\ref{tab:review} reports empirical results based on 500 replications. Like Tables~\ref{tab:add} and~\ref{tab:delete}, we report the empirical bias and gain. For the contamination (B), like Table~\ref{tab:add}, we observe that when $n=1$, the results are not in favor of $\widetilde \lambda_n^\mathrm{med}$. When $n=2$, we remark, as expected, that the larger the sub-squares $\Delta_n$, the lower the gain.  
The differences are quite similar when the number of sub-squares increases: the larger the number of sub-squares, the lower the gain.
As a conclusion, when data exhibit repulsion with a suspicion of areas with extra points, we recommend to use the estimator $\widetilde\lambda_n^\mathrm{med}$ if those areas  are not too large. Regarding the contamination (C), the conclusion is different. Even if we observe that the performances of the estimator decrease with the number of sub-squares, the gain is still very significant. In other words, the estimator $\widetilde\lambda_n^\mathrm{med}$ is shown to be very robust to missing information for repulsive point patterns.

To go further, as suggested by one reviewer, we investigate another type of outliers which is the addition (resp. deletion) of points uniformly on $W$ (resp. on $\bX$). When $5\%$ of points are added on average, we observe empirical biases and gains of $7.7.$ and $-105\%$ when $n=1$ and $5.5$ and $-26\%$ when $n=2$.  This clearly shows the limitation of the median-based estimator for repulsive patterns. It should not be used at all if we think that extra data are uniform on the observation domain. When, we  delete $5\%$ of points uniformly, we observe $-2.5$ and $53\%$ for the empirical bias and gain, when $n=1$ and $-3.4$ and $23\%$ when $n=2$. Surprisingly, the median-based estimator tends to be quite efficient compared to the standard estimator. It is somehow difficult to explain why the empirical bias increases with $n$. Overall, we think that a median-based estimator, is not tailored-made to take into account for outliers which are not "enough" isolated. We believe another approach should be considered for such a problem, like the one proposed by \citet{redenbach:16}. In the mentioned paper, the authors construct an MCMC algorithm which estimates the parameters of the superposition of a Strauss point process and a Poisson point process.\\

\begin{table}[htbp]
\centering
\begin{tabular}{rrrr}
\hline
 &\multicolumn{3}{c}{Number of sub-squares }\\
& 1 & 2 & 4 \\
\hline
Contamination (B)\\
$|\Delta_n|^{1/2}=n/10, n=1$&
4.3    (8)&  5.3  (-25)&  7.0  (-80)\\
$n=2$&
1.3   (81)&  2.2   (68)&  3.6   (39)\\
$|\Delta_n|^{1/2}=n/5, n=1$&
 5.1  (-11)&  6.1  (-55)&  7.2  (-96)\\
$n=2$&
2.0  (72)&  3.2  (48)  &4.6    (7)\\
$|\Delta_n|^{1/2}=2n/5, n=1$&
6.6  (-66) & 7.3 (-93)&  7.5 (-109)\\
$n=2$&
3.7   (35)&  4.9    (1)&  5.1  (-16)\\
\hline
Contamination (C)\\
$n=1$ &-0.9 (63)& -1.4 (59)& -2 (55)\\
$n=2$&-1.0 (78)& -1.3 (71) &-2 (55)\\
\hline
\end{tabular}
\caption{\label{tab:review} Bias and empirical gains in percent between brackets, see~\eqref{eq:gain}, for the estimator $\widetilde\lambda_n^\mathrm{med}$ given by~\eqref{eq:medianDD}. The empirical results are based on 500 replications generated on $[-n,n]^2$ for $n=1,2$ for the model {\sc dpp2}. To each point pattern, we add (contamination (B)) or delete (contamination (C)) on average 5\% of points in 1,2 or 4 randomly chosen sub-squares $\Delta_n$. In the setting (B), we  investigate different values for the side-length of $\Delta_n$.}
\end{table}

\begin{figure}[htbp]
\subfigure[Contamination (B), one sub-square]{\includegraphics[scale=.5]{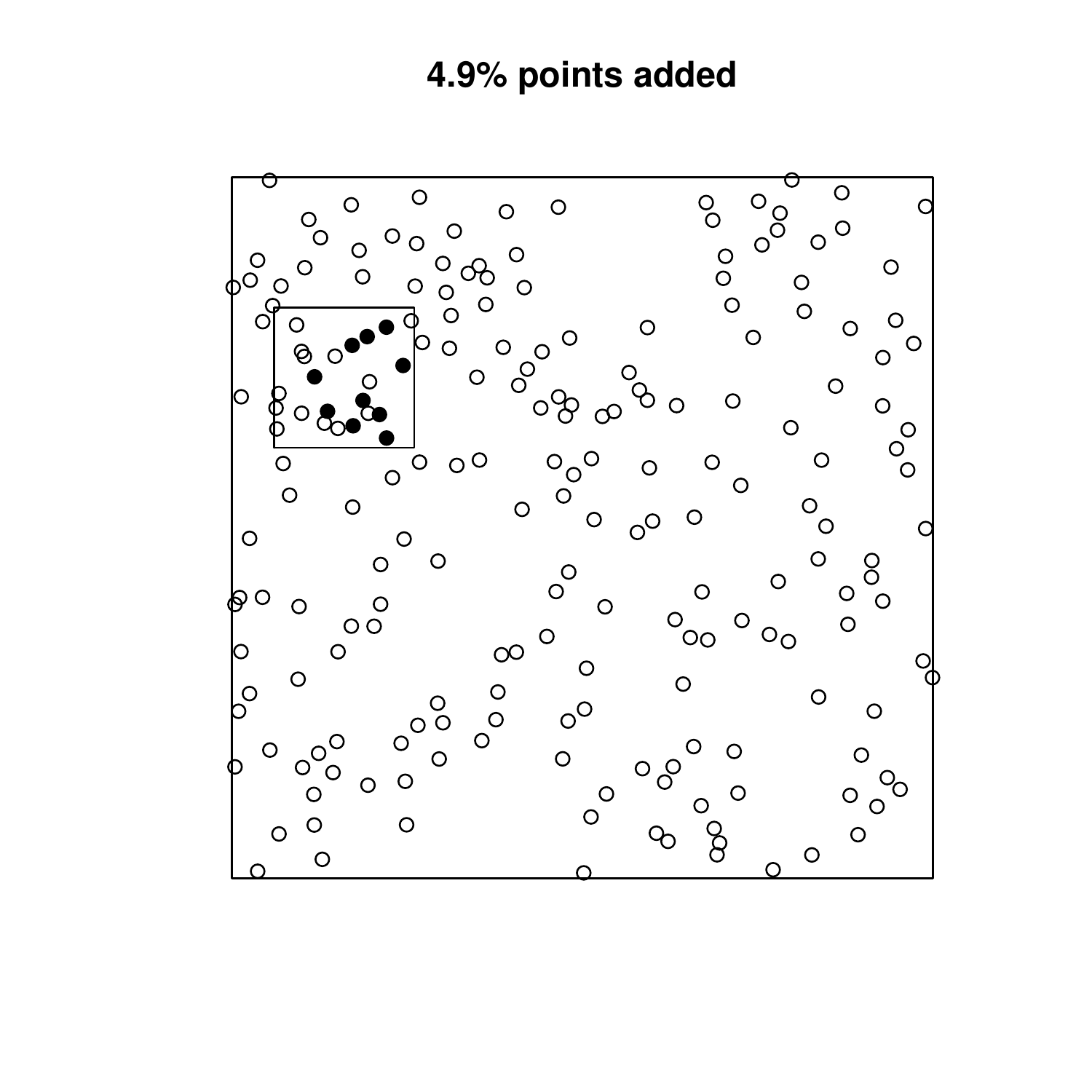}}
\subfigure[Contamination (B), four sub-squares]{\includegraphics[scale=.5]{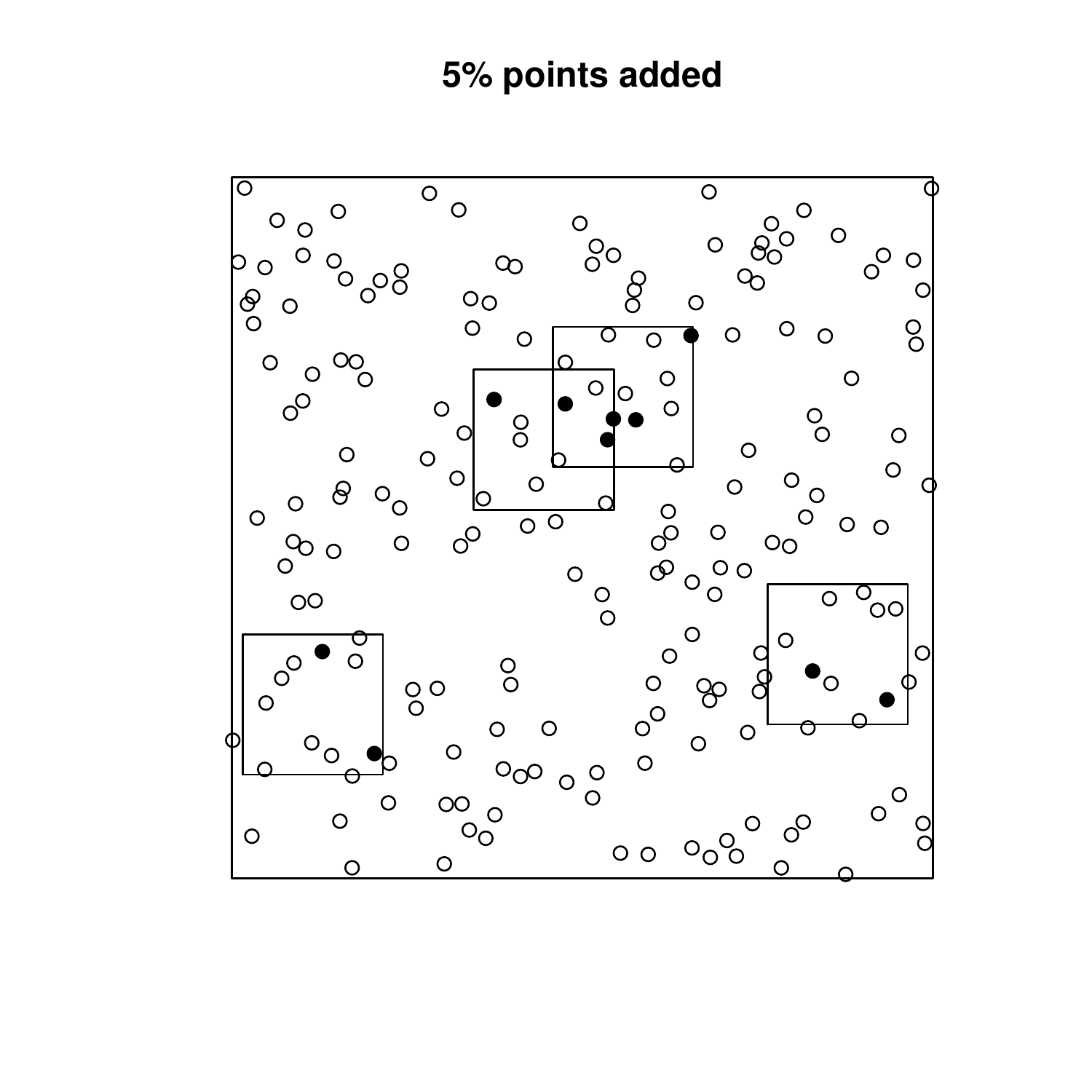}}
\subfigure[Contamination (C), one sub-square]{\includegraphics[scale=.5]{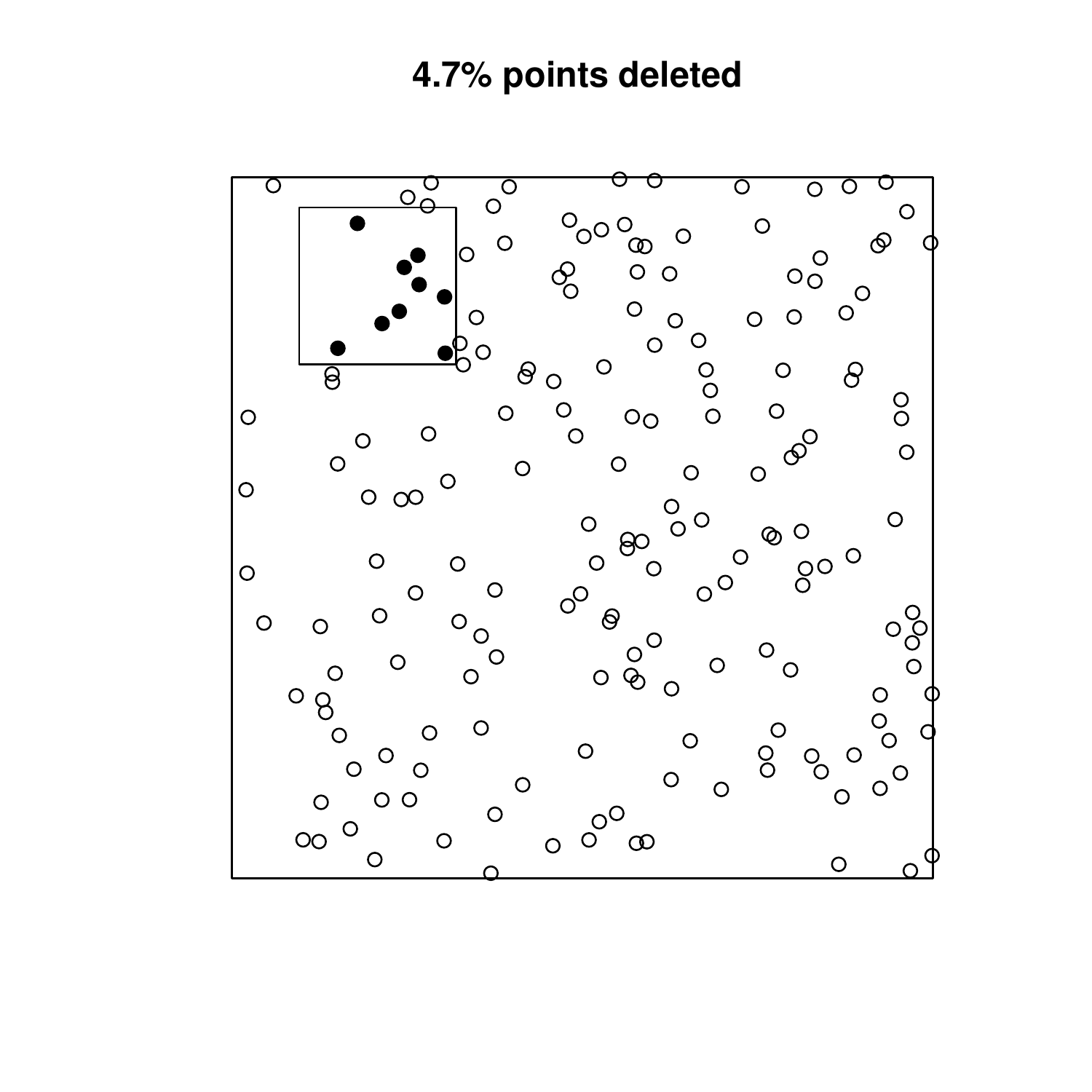}}
\subfigure[Contamination (C), four sub-squares]{\includegraphics[scale=.5]{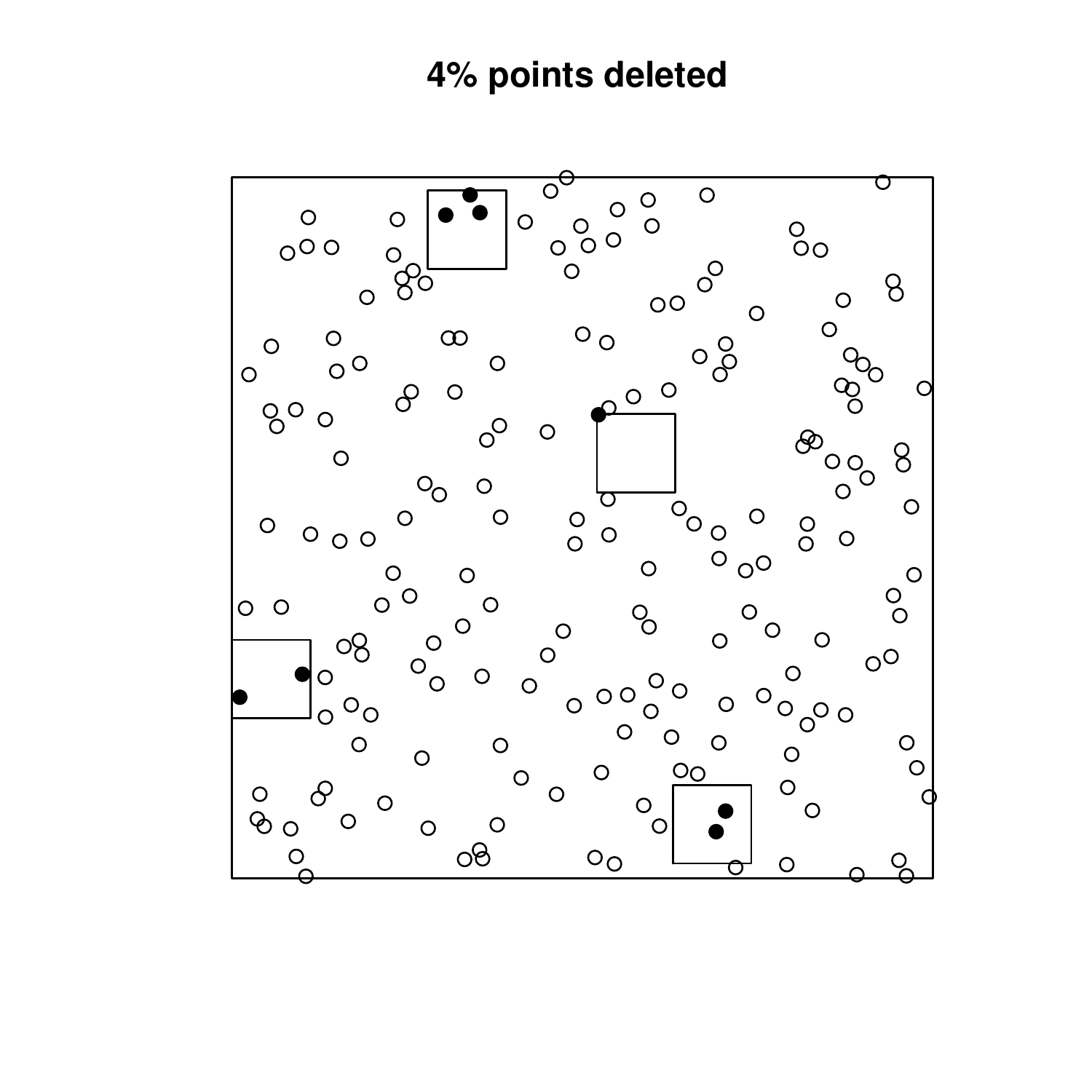}}
\caption{\label{exContamination} Examples of realizations after a contamination of type (B) or (C). The initial patterns are  realizations of the {\sc dpp2} model with intensity $\lambda=50$, on the domain $[-1,1]^2$. For the setting (B) (Figures (a) and (b)), the filled circles represent extra points added to the initial pattern. For the setting (C), the filled circles represent deleted points from the initial pattern. On average, 5\% of points are added or deleted.}
\end{figure}

\section{Conclusion} \label{sec:conclusion}

In this paper, we focus on the class of stationary determinantal point processes and present two estimators of the intensity parameter for which we prove asymptotic properties. Among the two estimators, one of them, namely the median-based estimator is tailored to be robust to outliers. The median-based estimator depends on a tuning estimator, the number of blocks into which the original window is divided. The empirical findings show that the results are quite sensitive to this parameter. To correct that sensitivity we propose a combined approach and define the estimator $\widetilde\lambda_n^\mathrm{med}$ as the median of median-based estimators computed for different number of blocks. The estimator $\widetilde\lambda_n^\mathrm{med}$ is very simple and quick to perform.

As a general conclusion of the simulation study, this combined estimator turns out to be robust to outliers. 
When at least 5\% out of 200 or more points, on average,  lying in possibly different areas of the observation domain are assumed to be not observed, we recommend the use of the combined estimator. If at least 10\% of points are added to a regular pattern with at least 200 points, on average, and when these extra points located in 1,2 or 4 areas, we also recommend to use the combined estimator. This also holds, if 5\% of points are added to a regular pattern with an average of at least 800 points,  or with patterns with at least an average of 200 points for which the extra points are localized in one small area (say 1/25th of the area of the observation domain). In the other situations, we recommend to use the standard estimator.
 
In this work, we did not aim at detecting outliers or detecting areas where problems are suspected (abundance or lack of points). If the assumption of stationarity seems valid, an inspection of the scan statistics (see e.g. \cite{baddeley:rubak;turner:15}) or a large difference between the median-based estimator and the standard estimator of the intensity parameter might allow the user to reconsider the observation window in a second step. This has not been considered in this papper.

\section*{Appendix}

\subsection*{Proof of Proposition~\ref{prop:ZW}}

It can be shown from the spectral decomposition~\eqref{eq:spectral} (see e.g. \cite{lavancier:moller:rubak:15}), that the number of points in $S$ satisfies $N(\bX\cap S) = \sum_{k=1}^{\infty} \mathcal{B}(\beta^S_k)$ in distribution, where $\mathcal{B}(\beta^S_k)$ are independent Bernoulli random variables with parameters $\beta^S_k$.

For $n\geq 1$, we define $N_n(\bX\cap S) = \sum_{k=1}^{n} \mathcal{B}(\beta^S_k)$. Under the assumptions of Proposition~\ref{DPPexistence}, $N_n(\bX\cap S)$ converges in distribution to $N(\bX\cap S)$ as $n$ tends to infinity. 
The random variable $N_n(\bX\cap S)$ is nothing else than a Poisson-Binomial distribution. 

We first prove the first part of~\eqref{eq:d0d1}. By \citet[Theorem 3.4]{zacharovas:hwanf:2010}, we have for all $m\geq 0$,
\begin{align}
\left| \P(N_n(\bX\cap S)) = m) -  \P(\Pi(\theta_{1,n}) = m  )\right| \leq \sqrt{3} (\sqrt{e}-1) \frac{\theta_{2,n}\sqrt{\theta_{1,n}}}{(\theta_{1,n}-\theta_{2,n})^2}
\label{eq:ZW}
\end{align}
where $\theta_{i,n} =\sum_{k=1}^n (\beta_k^S)^i$ for $i=1,2$. As $n \to \infty$, we have
\begin{align}
  \theta_{1,n}&\to \sum_{k\geq 1}\beta_k^S=\E(N(\bX\cap S)) =\lambda |S|\label{eq:th1n}\\
\theta_{2,n}&\to \sum_{k\geq 1}(\beta_k^S)^2=\E(N(\bX\cap S)) -\Var(N(\bX\cap S)) =\int_{S^2}C^2(x-y)\dd x \dd y.\label{eq:th2n}
\end{align}
We note first, that, a change of variables yields 
$\int_{S^2}C^2(x-y)\dd x \dd y < |S| \, \check C_0$ and that, second,
by Parseval's identity and since $0\leq\F(C)< 1$,
 \begin{align*}
  \lambda - \check C_0 = \lambda- \int_{\R^d} \F(C)^2(x)\dd x >  \lambda - \int_{\R^d} \F(C)(x)\dd x = 0
 \end{align*}
whereby we deduce that
\[
  \lim_{n\to \infty} (\theta_{1,n}-\theta_{2,n}) \geq |S| \left(\lambda -\check C_0\right) >0.
\]
Hence, the first part of~\eqref{eq:d0d1} is obtained by letting $n$ tend to infinity in~\eqref{eq:ZW}. For the second part of~\eqref{eq:d0d1}, we use \citet[Theorem~4.2]{zacharovas:hwanf:2010} which states that for  some constant $\tilde c$ independent of $m$, $n$ and $S$
\begin{equation}
  \label{eq:Deltan}
  |\Delta_n(S,m) | \leq \frac{\sqrt{15}(\sqrt{e}-1)}2 \frac{\theta_{2,n}^2 \sqrt{\theta_{1,n}}}{(\theta_{1,n}-\theta_{2,n})^3} +  \tilde c\frac{\theta_{3,n} \sqrt{\theta_{1,n}}}{(\theta_{1,n}-\theta_{2,n})^{5/2}}
\end{equation}
where $\Delta_n(S,m)=\P(N_n(\bX\cap S)=m) - \P(\Pi(\theta_{1,n})=m) \left( 1-\omega(\theta_{1,n},m)  \theta_{2,n}/2 \right)$ and  
$\theta_{3,n}= \sum_{k=1}^n (\beta_k^S)^3$. Since $\beta_k^S\leq 1$, then using~\eqref{eq:th1n} and~\eqref{eq:th2n}, we can show that 
\[
 \lim_{n\to \infty} \frac{\sqrt{15}(\sqrt{e}-1)}2 \frac{\theta_{2,n}^2 \sqrt{\theta_{1,n}}}{(\theta_{1,n}-\theta_{2,n})^3}\to \kappa_1 
  \quad \mbox{ and } \quad 
  \sup_{n\to \infty}\frac{\theta_{3,n} \sqrt{\theta_{1,n}}}{(\theta_{1,n}-\theta_{2,n})^{5/2}} \leq \frac1{|S|} \, \frac{\lambda^{3/2}}{(\lambda -\check C_0)^{5/2}}.
\]
Using these results, we deduce the second part of~\eqref{eq:d0d1} by letting $n\to \infty$ in~\eqref{eq:Deltan}.

\subsection*{Proof of Proposition~\ref{prop:median}}

We start with two ingredients used in (a)-(c). First, from Proposition~\ref{prop:ZW}
\begin{equation} \label{eq:ZW2}
\sup_{m\geq 0} |\P(N(\bX \cap C_{n,0})=m) - \P( \Pi(\lambda c_n)=m  ) | = \mathcal O(c_n^{-1/2}).  
\end{equation}
Second using Stirling's formula we have for any $\omega \in \R$ as $n\to \infty$
\begin{equation} \label{eq:stirling}
  \P( \Pi(\lambda c_n) = \lfloor v_n \rfloor) \sim 
\left\{
\begin{array}{ll}
(2\pi \lambda c_n)^{-1/2} & \mbox{ if } v_n= \lambda c_n + o(c_n^{1/2}) \\
(2\pi \lambda c_n)^{-1/2}e^{-\omega^2/2} & \mbox{ if } v_n= \lambda c_n + \omega c_n^{1/2}.
\end{array}
\right.
\end{equation}

(a) Under the assumptions (i)-(v) described below, \cite{coeurjolly:15}[Theorem 4.3] proved that 
 \begin{equation} \label{eq:thm43}
   \sqrt{|W_n|} s_n \left( \frac{\Mee (\bZ)}{c_n} - \frac{\Me_Z}{c_n}
   \right) \to \mathcal N(0,1/4)
 \end{equation}
 in distribution.

\noindent (i) As $n\to \infty$, $k_n\to \infty, c_n \to \infty$ and $k_n/c_n^{\eta^\prime/2} \to 0$ where $0<\eta^\prime<\eta$ where $\eta$ is given by (iv).\\
\noindent (ii) $\forall t_n= \lambda c_n + \mathcal O(\sqrt{c_n/k_n})$,
  $
     {\P(N(\bX \cap C_{n,0})=\lfloor t_n \rfloor)}/{ \P(N(\bX \cap C_{n,0})=\lfloor \lambda c_n \rfloor)}  \to 1.$\\
(iii)  $\liminf_{n\to \infty} s_n>0$ and $\limsup_{n\to \infty} s_n<\infty$.\\
\noindent (iv) $\bX$ has a pair correlation function $g$ satisfying $\int_{\R^d}|g(w)-1|\dd w<\infty$.\\
\noindent (v) There exists $\eta >0$ such that
\[
  \alpha(m)= \sup_{p\geq 1} \frac{\alpha_{p,p}(m)}{p} = \mathcal O(m^{-d(1+\eta)}) \quad \mbox{ and } \quad
  \alpha_{2,\infty}(m) = \mathcal O(m^{-d(1+\eta)})
\]
where $\alpha_{j,k}(m)$ for $j,k\in \N\cup \{\infty\}$ is defined by~\eqref{def:mixing}.

Therefore, the proof of (a) consists in verifying that $\HW$, $\HCbis$ and $\Hmed$ imply (i)-(v).
By $\HCbis$, $\bX$ is $R$-dependent and thus for any $m>R$ and $j,k\geq 1$, $\alpha_{j,k}(m)=0$. (v) is thus obviously satisfied  and $k_n/c_n^{\eta^\prime/2}\to 0$ can always be fulfilled. From~\eqref{eq:ZW2}
\[
   \frac{\P ( N(\bX \cap C_{n,0})=\lfloor t_n\rfloor) }{ \P(N(\bX \cap C_{n,0})=\lfloor \lambda c_n\rfloor)} 
   \sim \frac{\P(\Pi(\lambda c_n)=\lfloor t_n\rfloor) }{ \P(\Pi(\lambda c_n)=\lfloor \lambda c_n\rfloor)}
\]
as $n\to \infty$. By $\HW$, $t_n=\lambda c_n +o(c_n^{1/2})$ whereby (ii) is deduced from~\eqref{eq:stirling}.  The first part of (iii) corresponds to $\Hmed$ while the second part is deduced from (ii). (iv) is also clearly satisfied since $\bX$ is Brillinger mixing.

(b) First of all, by the Brillinger mixing property, \cite{coeurjolly:15}[Proposition~3.1] can be applied to derive $\Me_Z-\lambda c_n = \mathcal O(\sqrt{c_n})$. Now, under the assumptions (i), (iv) and~(v), we follow the proof of \cite{coeurjolly:15}[Theorem 4.2 Step 1] and state that: $|W_n|^{-1/2}(Z_{n,0}-\lambda c_n) \to \mathcal N(0,\tau^2)$ in distribution for some $\tau>0$, whereby we deduce that $\P(Z_{n,0}\leq\lambda c_n) -1/2\to 0$ as $n\to \infty$. Since $Z_{n,0}$ is a continuous random variable, the latter can also be rewritten as
\[
 \P(Z_{n,0}\leq \lambda c_n)  - \P(Z_{n,0} \leq \Me_Z) =o(1).  
\]
Since $F_Z$ is differentiable with derivative $f_Z(t)=\P(N(\bX \cap C_{n,0})=\lfloor t\rfloor)$, there exists $\tilde M\in [\Me_Z \wedge \lambda c_n,\Me_Z\vee \lambda c_n]$ such that 
\begin{equation}\label{eq:tmp}
  (\Me_Z-\lambda c_n )\P(N(\bX \cap C_{n,0})=\lfloor\tilde M\rfloor)=o(1)
\end{equation}
Since $\Me_Z=\lambda c_n+\mathcal O(\sqrt{c_n})$, there exists $\omega>0$ such that for $n$  sufficiently large, $|\tilde M - \lambda c_n| \leq \omega \sqrt{c_n}$. Using~\eqref{eq:stirling} and the fact that the mode of a Poisson distribution is close to its intensity parameter, we have for $n$ sufficiently large
\begin{align*}
 \P( \Pi(\lambda c_n)=\lfloor \tilde M\rfloor) &\geq  \inf_{ v_n, |v_n-\lambda c_n|\leq \omega\sqrt{c_n}} \P (\Pi(\lambda c_n) = \lfloor v_n \rfloor) \\
 &\geq \frac12  (2\pi \lambda c_n)^{-1/2}e^{-\omega^2/2} \\
 &\geq \frac14 e^{-\omega^2/2} \P(\Pi(\lambda c_n)=\lfloor \lambda c_n \rfloor).
 \end{align*} 
Hence, denoting by $\tilde \omega=e^{-\omega^2/2}/4>0$,  
we deduce using~\eqref{eq:ZW2} that for $n$ sufficiently large
\begin{align*}
\sqrt{c_n} \P ( N(\bX \cap C_{n,0})=\lfloor \tilde M \rfloor)& \geq \frac12 \sqrt{c_n} \P(N(\bX \cap C_{n,0})=\lfloor  \lambda c_n \rfloor) \frac{\P( \Pi(\lambda c_n)=\lfloor \tilde M\rfloor)}{\P( \Pi(\lambda c_n)=\lfloor \lambda c_n\rfloor)}   \\
&\geq \frac{\tilde \omega}2 \; \liminf_{n\to \infty} s_n  =:\check \omega>0
\end{align*}
by Assumption $\Hmed$. Hence, from~\eqref{eq:tmp},  $\check \omega (\Me_Z-\lambda c_n)/\sqrt{c_n}=o(1)$, which yields the result.

(c) From (a)-(b), taking $k_n=o(\sqrt{c_n}/(\Me_Z-\lambda c_n))$ is in agreement with $\HW$. Equation~\eqref{eq:resb} is thus a simple application of Slutsky's lemma.

\section*{Acknowledgements}
The authors would like to thank Denis Allard and Alfred Stein  for giving them the opportunity to contribute to this special issue. The authors are also grateful to the referees and the editor for their valuable comments and suggestions and  to Frédéric Lavancier for fruitful discussions. The research of J.-F. Coeurjolly is  funded by ANR-11-LABX-0025 PERSYVAL-Lab (2011, project OculoNimbus).
The research of Christophe A.N. Biscio are supported by the Danish Council for Independent Research | Natural
Sciences,
grant 12-124675,
"Mathematical and Statistical Analysis of Spatial Data", and by
the "Centre for Stochastic Geometry and Advanced Bioimaging",
funded by grant 8721 from the Villum Foundation.

\bibliographystyle{plainnat}

\bibliography{intensity}

\end{document}